\numberwithin{equation}{section}							
\def\csname ver@etex.sty\endcsname{3000/12/31}
\newtheorem{theorem}{Theorem}[section]
\newtheorem{Mtheorem}{Main Theorem}
\newtheorem*{acknowledgment}{Acknowledgment}
\newtheorem{corollary}[theorem]{Corollary}
\newtheorem{lemma}[theorem]{Lemma}
\crefname{theorem}{Theorem}{Theorems}						
\crefname{Mtheorem}{Main Theorem}{Main Theorems}			
\crefname{lemma}{Lemma}{Lemmata}							
\crefname{corollary}{Corollary}{Corollaries}				
\crefname{proposition}{Proposition}{Propositions}			
\crefname{ineq}{inequality}{inequalities}					
\crefname{cond}{condition}{conditions}						
\crefname{hypoth}{Hypothesis}{Hypotheses}					
\crefname{definition}{Definition}{Definitions}				
\crefname{appsec}{Appendix}{Appendices}
\let\originalleft\left
\let\originalright\right
\renewcommand{\left}{\mathopen{}\mathclose\bgroup\originalleft}
\renewcommand{\right}{\aftergroup\egroup\originalright}
\renewcommand*{\eqref}[1]{\hyperref[{#1}]{\textup{\tagform@{\ref*{#1}}}}}
\newcommand*{\eqdef}{\mathrel{\vcenter{\baselineskip0.5ex \lineskiplimit0pt\hbox{.}\hbox{.}}}=}
\def\id{\mathbbm{1}}
\def\cx{\mathbbm{C}}
\def\bL{\mathbbm{L}}
\def\rl{\mathbbm{R}}
\def\N{\mathbbm{N}}
\def\Z{\mathbbm{Z}}
\def\cB{\mathcal{B}}
\def\cC{\mathcal{C}}
\def\cD{\mathcal{D}}
\def\cE{\mathcal{E}}
\def\cG{\mathcal{G}}
\def\cH{\mathcal{H}}
\def\cI{\mathcal{I}}
\def\cL{\mathcal{L}}
\def\cM{\mathcal{M}}
\def\cT{\mathcal{T}}
\def\cV{\mathcal{V}}
\def\cX{\mathcal{X}}
\def\Ar{\mathrm{Area}}
\def\Im{\mathrm{Im}}
\def\index{\mathrm{index}}
\def\Re{\mathrm{Re}}
\def\Spec{\mathrm{Spec}}
\def\Sym{\mathrm{Sym}}
\def\rU{\mathrm{U}}
\def\vol{\mathrm{vol}}
\def\del{\partial}
\def\delbar{\overline{\partial}}
\def\rd{\mathrm{d}}
\def\Hess{\mathrm{Hess}}
\title[Nonminimal Ginzburg--Landau fields]{Nonminimal solutions to the Ginzburg--Landau equations on surfaces}
\date{\today}
\keywords{Ginzburg--Landau equations, nonminimal solutions}
\subjclass[2020]{35Q56, 53C07, 58E15}
\author{\'Akos Nagy}
\address[\'Akos Nagy]{University of California, Santa Barbara}
\urladdr{\href{https://akosnagy.com}{akosnagy.com}}
\email{\href{mailto:contact@akosnagy.com}{contact@akosnagy.com}}
\author{Gon\c{c}alo Oliveira}
\address[Gon\c{c}alo Oliveira]{IST Austria}
\urladdr{\href{https://sites.google.com/view/goncalo-oliveira-math-webpage/home}{sites.google.com/view/goncalo-oliveira-math-webpage/home}}
\email{\href{mailto:galato97@gmail.com}{galato97@gmail.com}}
\begin{document}

\begin{abstract}
	We prove the existence of novel, nonminimal and irreducible solutions to the (self-dual) Ginzburg--Landau equations on closed surfaces. To our knowledge these are the first such examples on nontrivial line bundles, that is, with nonzero total magnetic flux. Our method works with the 2-dimensional, critically coupled Ginzburg--Landau theory and uses the topology of the moduli space. The method is nonconstructive, but works for all values of the remaining coupling constant. We also prove the instability of these solutions.
\end{abstract}

\maketitle

\section{Introduction}
\label{sec:intro}

Ginzburg--Landau theory is one of the oldest gauge theoretic models of spontaneous symmetry breaking through the Higgs mechanism. The theory in dimension two can be summarized briefly as follows: Let $\left( \Sigma, g \right)$ be a closed, oriented, Riemannian surface whose volume form we denote by $\vol_g$. Fix a Hermitian line bundle $\left( \cL, h \right)$ over $\Sigma$ and a positive coupling constant $\tau \in \rl_+$. We follow the convention of physicists in that $h$ is complex linear in the second entry. For each smooth unitary connection $\nabla$ and smooth section $\upphi$ the \emph{Ginzburg--Landau energy} is given by
\begin{equation}
	\cE_\tau \left( \nabla, \upphi \right) = \int\limits_X \left( \left| F_\nabla \right|^2 + \left| \nabla \upphi \right|^2 + \tfrac{1}{4} \left( \tau - |\upphi|^2 \right)^2 \right) \ \vol_g. \label{eq:gle}
\end{equation}
The (classical) Ginzburg--Landau theory is the variational theory of the Ginzburg--Landau energy \eqref{eq:gle}. The corresponding Euler--Lagrange equations, called the \emph{Ginzburg--Landau equations}, are
\begin{subequations}
	\begin{align}
		\rd^* F_\nabla + i \: \Im \left( h \left( \upphi, \nabla \upphi \right) \right)	&= 0, \label{eq:gl1} \\
		\nabla^* \nabla \upphi - \tfrac{1}{2} \left( \tau - |\upphi|^2 \right) \upphi	&= 0. \label{eq:gl2}
	\end{align}
\end{subequations}
These are nonlinear, second order, elliptic partial differential equations which are invariant by the action of the group of automorphisms of $\left( \cL, h \right)$, also known as the gauge group. If a unitary connection $\nabla^0$ satisfies the abelian Yang--Mills equation (also known as the source-free Maxwell's equation)
\begin{equation}
	\rd^* F_{\nabla^0} = 0, \label{eq:normal}
\end{equation}
then the pair $\left( \nabla^0, 0 \right)$ solves the Ginzburg--Landau equations; such a pair is said to be a {\em normal phase solution}. Notice that \cref{eq:normal} is independent of $\tau$. As is common in abelian gauge theories, we call a pair $\left( \nabla, \upphi \right)$ {\em reducible} if $\upphi$ vanishes identically, and {\em irreducible} otherwise. A solution to the Ginzburg--Landau equations is reducible if and only if it is a normal phase solution.

\smallskip

On closed surfaces, the Ginzburg--Landau free energy is Palais--Smale by \cite{N17b}*{Main Theorem~2}, and thus has absolute minimizers which are automatically solutions to the Ginzburg--Landau equations. The minimizers, often called vortices, are well-understood; cf. \cites{JT80,B90,GP93} and more recently \cites{CERS17,N17b}. We remark that the Ginzburg--Landau energy \eqref{eq:gle} is often called \emph{critically coupled}, as there is a generalization of the energy \eqref{eq:gle} with a further coupling constant, (usually denoted by $\kappa$) which for is set to a special value ($\kappa = \tfrac{1}{2}$).

\smallskip

Much less is known about nonminimal solutions. In \cite{PS20}, Pigati and Stern constructed irreducible solutions on (topologically) trivial line bundles over closed Riemannian manifolds. As these solutions have positive energy, they cannot be the absolute minima of Ginzburg--Landau energy \eqref{eq:gle}. Finally, in our companion paper \cite{nagy_bifurcation_2022} we construct nonminimal and irreducible solutions on nontrivial line bundles over closed Riemannian manifolds with vanishing first Betti number.

\smallskip

In this paper, motivated by, and building on, works of \cites{Parker1992a,Parker1992b,Parker1993,N17b,CERS17}, we construct new, nonminimal and irreducible solutions to the Ginzburg--Landau equations. Furthermore, we prove the instability of these solutions, which extends the results of \cite{C20} to all closed surfaces, metrics, and degrees.

\medskip

\subsection*{Summary of main results}

Let $\left( \Sigma, g, \cL, h \right)$ be as above, with degree $d \eqdef c_1 (\cL) [\Sigma] \in \Z$. Without any loss of generality we can assume that $d \geqslant 0$. Let us define for the rest of the paper
\begin{equation}
	\tau_{\mathrm{Bradlow}} \eqdef \frac{4 \pi d}{\Ar \left( \Sigma, g \right)}.
\end{equation}

Our first main theorem shows the existence of such solutions in the situation above.

\begin{Mtheorem}\label{Mtheorem:nonminimal}
	Assume that $d \geqslant \mathrm{genus} \left( \Sigma \right)$. Then, for any positive integer $k \in \Z$, there is $\tau_k > \tau_{\mathrm{Bradlow}}$, such that for all generic $\tau > \tau_k$, the Ginzburg--Landau energy \eqref{eq:gle} has at least $k$ critical points which are neither vortices nor normal phase solutions.
\end{Mtheorem}

Our second main theorem completely classifies the local minima of the 2-dimensional critically coupled Ginzburg--Landau energy. This is an extension of the results of \cite{C20} to all closed surfaces, metrics, and degrees.

\begin{Mtheorem}
	\label{Mtheorem:instability}
	Under the hypothesis above, let $\left( \nabla, \upphi \right)$ be a stable critical point of the 2-dimensional, critically coupled Ginzburg--Landau energy. Then either:
	\begin{enumerate}

		\itemsep0em
		\parskip0em
		
		\item $\tau \leqslant \tau_{\mathrm{Bradlow}}$ and $\left( \nabla, \upphi \right) = \left( \nabla^0, 0 \right)$ is a normal phase solution.
		
		\item $\tau > \tau_{\mathrm{Bradlow}}$ and $\left( \nabla, \upphi \right)$ is a vortex field.
		
	\end{enumerate}	
	Equivalently, if $\left( \nabla, \upphi \right)$ is an irreducible critical point that is not a vortex field, then $\tau > \tau_{\mathrm{Bradlow}}$ and $\left( \nabla, \upphi \right)$ is unstable.
\end{Mtheorem}

\medskip

\subsection*{Organization of the paper} In \Cref{sec:GL_theory}, we give a brief introduction to the important geometric analytic aspects of the Ginzburg--Landau theory that are needed to prove our results. In \Cref{sec:existence} we study the topology of the configuration space and prove the existence of irreducible, nonvortex solutions in the 2-dimensional critically coupled Ginzburg--Landau theory (\Cref{Mtheorem:nonminimal}). Finally, in \Cref{sec:instability} we prove the instability of irreducible, nonvortex solutions in the 2-dimensional critically coupled Ginzburg--Landau theory (\Cref{Mtheorem:instability}).

\begin{acknowledgment}
	We are also grateful to Casey Kelleher for her help during the early stages of the project and for Da Rong Cheng for teaching us the proof of \Cref{theorem:index_bound}.

	The first author is thankful to Israel Michael Sigal and Steve Rayan for their insights on the topics of the paper.
	
	The second author was supported by the NOMIS Foundation, Funda\c{c}\~ao Serrapilheira 1812-27395, by CNPq grants 428959/2018-0 and 307475/2018-2, and FAPERJ through the program Jovem Cientista do Nosso Estado E-26/202.793/2019.
\end{acknowledgment}

\bigskip

\section{Ginzburg--Landau theory on surfaces}
\label{sec:GL_theory}

Let $\left( \Sigma, g \right)$ be a closed, oriented, Riemannian manifold of surface. Let us fix a connection $\nabla^0$ that satisfies \cref{eq:normal}. We define the Sobolev norms of $\cL$-valued forms via the Levi-Civita connection of $\left( \Sigma, g \right)$ and the connection $\nabla^0$. Note that the induced topologies are the same for any choice of $\nabla^0$ and since the moduli of normal phase solutions is compact (modulo gauge), and if a Coulomb-type gauge fixing condition is chosen (with respect to a reference connection), then the family of norms are in fact uniformly equivalent, that is, for all $k$ and $p$, there exists a number $C_{k, p} \geqslant 1$, such that for any two Sobolev $L_k^p$-norms, $\| \cdot \|_{L_k^p}$ and $\| \cdot \|_{L_k^p}^\prime$, given by two connections satisfying \cref{eq:normal} and the Coulomb condition, we have
\begin{equation}
	\frac{1}{C_{k, p}} \| \cdot \|_{k, p}^\prime \leqslant \| \cdot \|_{k, p} \leqslant C_{k, p} \| \cdot \|_{k, p}^\prime.
\end{equation}

Let $\Omega^k$ be the $L_1^2$-completion of the space of smooth $k$-forms. Let $\cC_\cL$ be the $L_1^2$-completion of smooth unitary connections on $\cL$, which is an affine space over $i \Omega^1$, and thus the tangent bundle, $T \cC_\cL$, is canonically isomorphic to $\cC_\cL \times i \Omega^1$. In particular, $\cC_\cL$ is a Hilbert manifold. Let $\Omega_{\rd^*}^1 \subset \Omega^1$ be the kernel of $\rd^*$ and $\cC_{\cL, \rd^*} \eqdef \nabla^0 + \Omega_{\rd^*}^1$. Finally, let $\Omega_\cL^k$ be the $L_1^2$-completion of the space of smooth $\cL$-valued $k$-form. Similarly, we define $\Omega_\cL^{p, q}$.

With these definitions, the Ginzburg--Landau energy \eqref{eq:gle} is an real-analytic function on $\cX \eqdef \cC_{\cL, \rd^*} \times \Omega_\cL^0$. In other words, we are working with the Coulomb gauge fixing. Without gauge fixing, \cite{JT80}*{page~204,~Theorem~2.4} shows that every critical point is gauge equivalent to a smooth one, which in turn is a solution to \cref{eq:gl1,eq:gl2}. Equivalently, with the Coulomb gauge fixing condition, we can use elliptic regularity to show the same. Thus gauge equivalence classes of critical points of the Ginzburg--Landau energy \eqref{eq:gle} are in one-to-one correspondence with gauge equivalence classes of smooth solutions of the \cref{eq:gl1,eq:gl2}.

Note that harmonic gauge transformations are not eliminated by the Coulomb gauge fixing condition and such transformations do not act freely on configurations of the form $\left( \nabla, 0 \right)$. To resolve this problem, we fix $x \in \Sigma$ and then we can identify harmonic gauge transformations which are the identity at $x$ with $H^1 \left( \Sigma; \Z \right)$. We then define
\begin{equation}
	\cB \eqdef \cX / H^1 \left( \Sigma; \Z \right). \label{eq:config_space}
\end{equation}
Since the Ginzburg--Landau energy \eqref{eq:gle} is gauge invariant, it descends to a functional on $\cB$, which we denote the same way. Now let
\begin{equation}
	\cM \eqdef \left\{ \ (\nabla^0 + A, 0) \ | \ \rd A = 0 \ \right\} / H^1 \left( \Sigma; \Z \right) \subset \cB, \label{eq:harmonic_connections}
\end{equation}
the moduli space of normal phase solutions. One can easily see that
\begin{equation} 
	\cM \cong H^1 \left( \Sigma; \rl \right) / H^1 \left( \Sigma; \Z \right),
\end{equation}
Thus $\cM$ is a torus of (real) dimension $2 \cdot \mathrm{genus} \left( \Sigma \right)$.

\bigskip

\section{Nonminimal solutions through the topology of the configuration space}
\label{sec:existence}

In this section we prove \Cref{Mtheorem:nonminimal}. Without any loss of generality, assume that $d \eqdef \deg (\cL) \geqslant 0$.

On the one hand, when $\tau \leqslant \tau_{\mathrm{Bradlow}} = \tfrac{4 \pi d}{\Ar \left( \Sigma, g \right)}$, then by \cite{N17b}*{Main Theorem~2}, the only critical points are the normal phase solutions, thus we also assume that $\tau > \tau_{\mathrm{Bradlow}}$. On the other hand, when $\tau > \tau_{\mathrm{Bradlow}}$, then Bradlow showed in \cite{B90} that the moduli space of the absolute minimizers of the critically coupled Ginzburg--Landau energy (called vortices) is diffeomorphic to $\Sym^d \left( \Sigma \right)$, which is a smooth, complex $d$-dimensional K\"ahler manifold.

\smallskip

\subsection{The Hessian at a normal phase solution}
\label{ss:Hessian_Mormal_Phase_solutions}

Let $\left( \nabla^0, 0 \right)$ be a normal phase solution and define $\Delta_0 \eqdef \left( \nabla^0 \right)^* \nabla^0$ on $\Omega_\cL^0$. The Hessian of the Ginzburg--Landau energy at $\left( \nabla^0, 0 \right)$ is
\begin{equation}
	\Hess \left( \cE_\tau \right)_{\left( \nabla^0, 0 \right)} (a, \psi) = 2 \langle(a, \psi) | Q (a, \psi) \rangle_{L^2},
\end{equation}
where
\begin{equation}
	Q (a, \psi) \eqdef \begin{pmatrix} Q_1 (a, \psi) \\ Q_2 (a, \psi) \end{pmatrix} = \begin{pmatrix} \rd^* \rd a \\ \Delta_0 \psi - \tfrac{\tau}{2} \psi \end{pmatrix}.
\end{equation}
Considered as a densely defined, self-adjoint operator on the $L^2$-completion of $i \Omega_{\rd^*}^1 \times \Omega_\cL^0$, the spectrum of the operator $Q$ is bounded from below, contains only countably many eigenvalues (without accumulation points), and each eigenspace is finite dimensional. With that in mind, we define the (Morse-)index of the Hessian to be the dimension of the negative eigenspace of $Q$, that is
\begin{equation}
	\mbox{The (Morse-)index of } \Hess \left( \cE_\tau \right)_{\left( \nabla^0, 0 \right)} = \sum\limits_{\lambda \in (- \infty, 0)} \dim_\rl \left( \ker \left( Q - \lambda \id \right) \right).
\end{equation}
We now bound this index from below by looking for eigensections of the operator $Q$ with negative eigenvalues. This leads to the following result.

\begin{lemma}\label{lemma:Hessian_Maxwell}
	For any $N \in \N$, there is a $\tau_N$ such that for all $\tau > \tau_N$ the Hessian of $\cE_\tau$ at $\left( \nabla^0, 0 \right)$ has index at least $N$.
\end{lemma}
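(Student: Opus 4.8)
The plan is to exploit the block-diagonal structure of $Q$: since $Q_1(a,\psi) = \rd^*\rd a$ depends only on $a$ and $Q_2(a,\psi) = \Delta_0\psi - \tfrac{\tau}{2}\psi$ depends only on $\psi$, the operator $Q$ decouples as $Q = Q_1 \oplus Q_2$ on the $L^2$-completion of $\Omega_{\rd^*}^1 \times \Omega_\cL^0$. The first block $\rd^*\rd$ acting on coclosed $1$-forms is nonnegative, so it contributes nothing to the index. Hence the entire negative eigenspace of $Q$ comes from the second block, and
\begin{equation}
	\text{index}\left( \Hess\left( \cE_\tau \right)_{\left( \nabla^0, 0 \right)} \right) = \sum_{\mu \in \Spec(\Delta_0),\ \mu < \tau/2} \dim_\cx \ker\left( \Delta_0 - \mu\id \right),
\end{equation}
where I should be mildly careful about the real-versus-complex count: $\Omega_\cL^0$ is a space of sections of a complex line bundle but the index is a real dimension, so each complex eigenvalue $\mu$ of $\Delta_0$ contributes $2\dim_\cx \ker(\Delta_0 - \mu\id)$ to the real index.

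The key point is then simply that $\Delta_0 = (\nabla^0)^*\nabla^0$ is a nonnegative, self-adjoint, second-order elliptic operator on sections of a line bundle over a closed manifold, so by standard elliptic theory its spectrum is a discrete sequence $0 \leqslant \mu_1 \leqslant \mu_2 \leqslant \cdots \to \infty$ of eigenvalues of finite multiplicity, with no accumulation point. Given $N \in \N$, let $\mu_N$ be the $N$-th eigenvalue counted with multiplicity, and set $\tau_N \eqdef 2\mu_N + 1$ (or any value with $2\mu_N < \tau_N$). Then for every $\tau > \tau_N$ we have $\mu_1, \dots, \mu_N < \tau/2$, so the negative eigenspace of $Q$ contains the span of the corresponding eigensections of $\Delta_0$ (viewed as pairs $(0,\psi)$), which has real dimension at least $N$. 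This gives the desired lower bound on the index.

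The argument is essentially immediate once the decoupling is observed, so there is no real obstacle; the only things to be careful about are (i) checking that $\rd^*\rd$ on $\Omega_{\rd^*}^1$ is genuinely nonnegative (it is, being of the form $T^*T$), so that it truly contributes zero negative directions, and (ii) the factor-of-two bookkeeping between complex and real dimensions, which only helps the inequality. One should also note $\tau_N$ can be taken independent of the choice of normal phase solution $(\nabla^0,0)$, since the moduli space of such solutions is compact modulo gauge and the relevant spectra vary continuously (indeed, are gauge-invariant), though for the statement as written it suffices to fix one $\nabla^0$.
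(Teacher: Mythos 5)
Your proposal is correct and follows essentially the same route as the paper: both produce negative directions of the form $(0,\psi)$ with $\psi$ an eigensection of $\Delta_0$, and take $\tau_N$ on the order of twice the $N$-th eigenvalue $\mu_N$ so that $\mu_1-\tfrac{\tau}{2},\dots,\mu_N-\tfrac{\tau}{2}<0$. Your additional observations (nonnegativity of $\rd^*\rd$ giving an exact index formula, and the real-versus-complex dimension count) are sound but not needed for the stated lower bound.
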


\begin{proof}
	We can construct eigensections of $Q$ with a negative eigenvalues of the form $(0, \psi)$ with $\psi$ being an eigensection of $\Delta_0 - \tfrac{\tau}{2} \id$. Let $\mu_1 \leqslant \mu_2 \leqslant \ldots$ be the eigenvalues of $\Delta_0$. Then the eigenvalues of $\Delta_0 - \tfrac{\tau}{2} \id$ are \
	\begin{equation}
		\mu_1 - \tfrac{\tau}{2} \leqslant \mu_2 - \tfrac{\tau}{2} \leqslant \cdots \leqslant \mu_N - \tfrac{\tau}{2} \leqslant \ldots.
	\end{equation}
	Thus setting $\tau_N > 0$ so that
	\begin{equation}
		N \leqslant \dim_\rl \left( \bigoplus\limits_{2 \mu < \tau} \ker \left( \Delta_0 - \tfrac{\tau}{2} \id \right) \right),
	\end{equation}
	concludes the proof. In particular, $\tau_N \eqdef 2 \mu_N$ works.
\end{proof}

\smallskip

In our next lemma we study the behavior of $\cE_\tau$ on $\cM \subseteq \cB$; cf \cref{eq:config_space,eq:harmonic_connections}. Note that now $\cM$ is a torus of dimension $b \eqdef b_1 (\Sigma) = 2 \mathrm{genus} (\Sigma)$.

\begin{lemma}\label{lem:Almost_Morse_Bott}
	There is a residual subset $S \subseteq \rl_+$, such that if $\tau \in S$, then there is an open and dense subset $\cM_\tau \subseteq \cM$ such that for each $[(\nabla, 0)] \in \cM_\tau$ there is an open subset $U \subseteq \cB$ such that $[(\nabla, 0)] \in U$ and there are coordinates (for some $c \in \N_+$)
	\begin{equation}
		(x, u, v) = (x_1, \ldots, x_b, u^-_1, \ldots u^-_c, u^+) : U \rightarrow \rl^b \times \rl^c \times l^2 (\N),
	\end{equation} 
	so that $\cM \cap U = (u^+)^{-1} (0) \cap (u^-)^{-1} (0)$ and  
	\begin{equation}
		\sum\limits_{i = 1}^c u_i^- \frac{\del \cE_\tau}{\del u_i^-}(x,u^-,u^+) < 0, \quad \& \quad \sum\limits_{i = 1}^\infty u_i^+ \frac{\del \cE_\tau}{\del u_i^+}(x,u^-,u^+) > 0
	\end{equation}
	for $u^- \neq 0$ and $u^+ \neq 0$ respectively.
\end{lemma}

\begin{proof}
	Throughout the proof we use that $T_{[(\nabla^0, 0)]} \cB \cong \Omega_{\rd^*}^1 \oplus \Omega_\cL^0$ and the isomorphism is canonical. 
	
	We start by showing that the set of points of $\cM$ at which $\cE_\tau$ is Morse--Bott, in the sense of \cite{Feehan2020}*{Definition~1.5}, is an open dense set. By definition these are the points at which the restriction of $\mathrm{Hess} \left( \cE_\tau \right)$ orthogonally decomposes the tangent bundle as follows:
	\begin{equation}
		T \cB \cong T \cM \oplus V_+ \oplus V_-, \label{eq:TB_decomp}
	\end{equation}
	with $V_\pm$ denoting the span of the positive and negative eigenspaces of $Q = (Q_1,Q_2)$. 
	
	The operator $Q_1 = \rd^* \rd$ is self-adjoint, elliptic (thus, Fredholm), it is nonnegative on $\Omega_{\rd^*}^1$ and its kernel is exactly the space harmonic 1-forms, which is also canonically the same as $T_{[(\nabla^0, 0)]} \cM$. Similarly, $Q_2 = \nabla^* \nabla - \tfrac{\tau}{2} \id_{\Omega_\cL^0}$ is self-adjoint and Fredholm, so it is enough to show that generically (for a residual set of $\tau$'s) zero is not in its spectrum.
	
	For all $\tau \in \rl$, let
	\begin{equation}
		\cM_\tau \eqdef \left\{ \ [\nabla] \in \cM \ \middle| \ 0 \notin \Spec \left( \nabla^* \nabla - \tfrac{\tau}{2} \id_{\Omega_\cL^0} \right) \ \right\} \subseteq \cM.
	\end{equation}
	Thus $\cM_\tau$ is exactly the set of Morse--Bott points.
	
	Recall that if $D$ is a Fredholm operator, then there is $\epsilon = \epsilon (D) > 0$, such that if $D^\prime$ is $\epsilon$-close to $D$ (in operator norm), then $D^\prime$ is also Fredholm and
	\begin{equation}
		\dim \left( \ker \left( D^\prime \right) \right) \leqslant \dim \left( \ker \left( D \right) \right). \label[ineq]{ineq:dim_drop}
	\end{equation}
	Thus $\cM_\tau$ is always open.
	
	Let
	\begin{equation}
		S \eqdef \left\{ \ \tau \in \rl \ \middle| \ \cM_\tau \mbox{ is dense} \ \right\}.
	\end{equation}
	We need to show that $S \subseteq \rl$ is residual. Pick a countable dense set $\cD \subseteq \cM$. For each $[\nabla] \in \cD$, let
	\begin{equation}
		S_{[\nabla]} \eqdef \rl - \Spec \left( 2 \nabla^* \nabla \right).
	\end{equation}
	Thus $\tau \in S_{[\nabla]}$ exactly when $0 \notin \Spec \left( \nabla^* \nabla - \tfrac{\tau}{2} \id_{\Omega_\cL^0} \right)$. Furthermore, $S_{[\nabla]}$ is open and dense in $\rl$. Let
	\begin{equation}
		S^\prime \eqdef \bigcap\limits_{[\nabla] \in \cD} S_{[\nabla]}.
	\end{equation}
	Then for all $\tau \in S^\prime$, we have that $\cD \subseteq \cM_\tau$ and thus $\cM_\tau$ is dense. Hence $S^\prime \subseteq S$. Then, by the Baire category theorem, $S \subseteq \rl$ is residual, as it contains a countable intersection of open dense sets.
	
	The orthogonality in \cref{eq:TB_decomp} follows from the fact that $Q$ is self-adjoint. The rest then follows from the Morse--Bott Lemma in \cite{Feehan2020}*{Theorem 4}.
\end{proof}

\medskip

\subsection{Perturbing the Ginzburg--Landau energy}

Recall that we assume $d = \deg (\cL) \geqslant 0$ and $\tau > \tau_{\mathrm{Bradlow}} = \tfrac{4 \pi d}{\mathrm{Area}\left( \Sigma \right)}$. Then we have two special submanifolds of $\cB$: the moduli space of normal phase solutions, $\cM$, and the framed moduli space of vortices, which we call $\cV$. Recall from \Cref{sec:GL_theory} that $\cM$ is always isomorphic to the Jacobian of $\Sigma$, which is a torus of dimension $2 \: \mathrm{genus} \left( \Sigma \right)$. In \Cref{app:MB_of_V}, we prove that $\cV \subset \cB$ is a smooth and closed submanifold of $\cB$ and $\cE_\tau$ is a Morse--Bott function around $\cV$. As all elements of $\cV$ are irreducible, the remaining gauge action of $\rU (1)$ acts freely on $\cV$. Thus $\cV$ is a principal $\rU (1)$-bundle over the moduli space of vortices, which in turn is canonically isomorphic to $\Sym^d \left( \Sigma \right)$; cf. \cite{B90}.

In this section we perturb the Ginzburg--Landau energy \eqref{eq:gle} so that it becomes a Morse function near $\cM$ and $\cV$, but all other critical points are unchanged.

Let us make a few definitions first. Pick $\delta  > 0$ small enough so that the $\delta$-neighborhoods of $\cM$ and $\cV$ in $\cB$, which we call $U_\cM$ and $U_\cV$, respectively, are tubular neighborhoods. By \cite{N17b}*{Main~Theorem~4}, we can assume that $\cE_\tau$ has no critical points in $\left( U_\cM \cap U_\cV \right) - \left( \cM \cap \cV \right)$. Identifying $U_\cM$ and $U_\cV$ with tubular neighbourhoods of the zero section in the normal bundles of $\cM$ and $\cV$ respectively, we shall denote by $\pi_\cM: U_\cM \rightarrow \cM$ and $\pi_\cV: U_\cV \rightarrow \cV$ the respective projections. Then, we choose bump functions $\chi_\cM$ and $\chi_\cV$ respectively supported in $U_\cM$ and $U_\cV$, and which take value 1 on the $\tfrac{\delta}{2}$-neighborhoods, $\widetilde{U}_\cM \subset U_\cM$ and $\widetilde{U}_\cV \subset U_\cV$, of $\cM$ and $\cV$, respectively.

Finally, we shall pick perfect Morse functions $f_\cM : \cM \rightarrow \rl$ and $f_\cV : \cV \rightarrow \rl$, so that the critical points of $f_\cM$ do not coincide with the non-Mose--Bott points of $\cE_\tau$, i.e. they are contained in the subset $\cM_\tau \subset \cM$ given by Lemma \ref{lem:Almost_Morse_Bott}. 

Now we use functions with these properties to perturb the Ginzburg--Landau energy $\cE_\tau$ on $\cB$ as follows. Let $\epsilon >0$ be a small positive number to be fixed at a later stage, the modified Ginzburg--Landau energy is
\begin{equation}
	\cE_\tau^\epsilon \eqdef \cE_\tau + \epsilon  \left( \chi_\cM (f_\cM \circ \pi_\cM ) + \chi_\cV ( f_\cV \circ \pi_\cV ) \right),
\end{equation}
Notice that $\cE_\tau^\epsilon$ coincides with $\cE_\tau$ outside $U_\cM \cup U_\cV$. 

\smallskip

The main result of this section considers the critical points of $\cE_\tau^\epsilon$. It is the following.

\begin{theorem}\label{theorem:perturbed_E_tau}
	There are functions $f_\cM$, $f_\cV$, and positive constants $\delta$, $\epsilon$, all as in the discussion above, such that the following statement holds. For generic $\tau > \tau_{\mathrm{Bradlow}}$, the critical points of $\cE_\tau^\epsilon$ coincide with those of $\cE_\tau$ away from $\cM \cup \cV$, and with those of $f_\cM$ and $f_\cV$ in $\cM$ and $\cV$ respectively.
\end{theorem}

\begin{proof}
	The claim is local, thus we can determine the location of the critical points by separating the analysis into three different regions: $\cB - (U_\cM \cup U_\cV)$, $\widetilde{U}_\cM \cup \widetilde{U}_\cV$, and $(U_\cM - \widetilde{U}_\cM) \cup (U_\cV - \widetilde{U}_\cV)$. Throughout the proof we shall be assuming that we have chosen a generic $\tau$ for which Lemma \ref{lem:Almost_Morse_Bott} applies.
	
	\subsubsection*{First region, i.e. $\cB - (U_\cM \cup U_\cV)$:} In this region $\chi_\cM=0=\chi_\cV$ and so the perturbed functional coincides with the unperturbed one, i.e. $\cE_\tau = \cE_\tau^\epsilon$, and so their critical points coincide there.
	
	\subsubsection*{Second region, i.e. $\widetilde{U}_\cM \cup \widetilde{U}_\cV$:} We now show that we can arrange $f_\cM$ and $\widetilde{U}_\cM$ (i.e. $\delta$) so that the only critical points of $\cE_\tau^\epsilon$ in $\widetilde{U}_\cM$ lie in $\cM$ and coincide with those of $f_\cM$. We have to distinguish between two situations, when we are in $\cM_\tau$ which consists of the Morse--Bott points and when we are away from these, i.e. in $\cM - \cM_\tau$. It is important to remark that what we do in this region works for all positive $\epsilon$.
	
	The same argument also shows that the only critical points of $\cE_\tau^\epsilon$ in $ \widetilde{U}_\cV$ actually coincide with those of $f_\cV$ and lie in $\cV$. In fact, the argument in that case is easier as there are no non-Morse--Bott points as shown in the Appendix \ref{app:MB_of_V}.
	
	We start withe the situation near a non-Morse--Bott point, $p \in \cM - \cM_\tau$, of $\cE_\tau$. First fix an open neighborhood $B_p \subseteq \cM$ of $p \in \cM$ of $\cE_\tau$, and consider $\widetilde{U}_\cM \cap \pi_\cM^{- 1} (B_p)$. If $\dim (\cM) = 2 \: \mathrm{genus} (\Sigma) = 0$ then $\cM$ is a point and we can choose $f_\cM=0$. Hence, we consider the case when $\dim (\cM) = 2 \: \mathrm{genus} (\Sigma)$ is positive and therefore at least two. Since $\mathrm{codim} \left( \ker \left( D \cE_\tau \right) \right) \leqslant 1$ and $\dim (T \cM) > 1$, we can choose a nonzero vector-field $X \in \ker \left( D \cE_\tau \right) \cap \pi_\cM^* T\cM$ on $\widetilde{U}_\cM \cap \pi_\cM^{- 1} (B_p)$. By possibly shrinking $B_p$ we may assume that and $D f_\cM((\pi_\cM)_\ast X)$ never vanishes on $B_p$. Indeed, if this was not possible, then for any such vector fields $X$ there would exist a sequence of points $\lbrace q_n \rbrace_{n \in \mathbb{N}} \subset \cM$ with $q_n \to p$ and $\left( D f_\cM ((\pi_\cM)_* X) \right)_{q_n}=0$. Given that this must be true for all vector fields $X$ as above, which generate $T_p\cM$ upon restriction to $p$, this would imply that $p$ is a critical point of $f_\cM$ which contradicts our choice of $f_\cM$.
	
	Summarizing, we have such a vector field for which $D \widetilde{f}_\cM(X)$ never vanishes in $\widetilde{U}_\cM \cap \pi_\cM^{- 1} (B_p)$. Then, by construction we have
	\begin{equation}
		D \cE_\tau^\epsilon (X) = D \cE_\tau (X) + \epsilon D\widetilde{f}_\cM(X) = \epsilon D \widetilde{f}_\cM(X) \neq 0,
	\end{equation}
	independently of $\epsilon>0$. By \Cref{lem:Almost_Morse_Bott}, the set of non-Morse--Bott is a closed subset of $\cM$ and thus compact. Hence, we can construct an open subset $B$ of $\cM - \cM^\prime$ in this manner, so that $\cE_\tau^\epsilon$ has no critical point in the region $\pi_\cM^{- 1} (B) \cap \widetilde{U}_\cM$. 
	
	Now, we investigate the existence of critical points of $\cE_\tau^\epsilon$ in the $\widetilde{U}_\cM$-complement of this region, that is, in $\widetilde{U}_\cM - (\widetilde{U}_\cM \cap \pi_\cM^{- 1} (B))$. At any critical point in this region, we must have
	\begin{equation}\label{eq:Intermediate computation critical pint}
		D \cE_\tau = - \epsilon  \chi_\cM  D ( \pi_\cM^* f_\cM ).
	\end{equation}
	Then, for the generic $\tau > \tau_{\mathrm{Bradlow}}$, we can use the coordinates $(x,u^+,u^-)$ from \Cref{lem:Almost_Morse_Bott}, we find that if $(u^+, u^-) \neq (0, 0)$, either $u_i^+\del_{u_i^+} \cE_\tau \neq 0$ or $u_i^-\del_{u_i^-} \cE_\tau \neq 0$. Putting this together with \ref{eq:Intermediate computation critical pint}, we find that at any critical point having $(u^+,u^-) \neq (0,0)$, we would either have $\del_{u^+_i} ( \pi_\cM^* f_\cM ) \neq 0$ or $\del_{u^-_i} ( \pi_\cM^* f_\cM )  \neq 0$, which is impossible as $\pi_\cM^* f_\cM$ is pulled back from $\cM$ and so only depends on the $x$ coordinates. Hence, at any critical point of $\cE_\tau^\epsilon$ in $\widetilde{U}_\cM$ we must have both $u^+=0=u^-$ and so it lies in $\cM$. Thus, since $D \cE_\tau = 0$ along $\cM$, we must also have $D f_\cM=0$ as we wanted to show. Notice in particular that this argument works independently of the value of $\epsilon$, as long as it is nonzero.
	
	\smallskip
	
	\subsubsection*{Third region, i.e.  $(U_\cM - \widetilde{U}_\cM) \cup (U_\cV - \widetilde{U}_\cV)$:} We argue by contradiction and assume that there is a critical point of $\cE_\tau^\epsilon$ in $(U_\cM - \widetilde{U}_\cM) \cup (U_\cV - \widetilde{U}_\cV)$. At that point we have
	\begin{equation}\label{eq:Crit_Points_Intermediate}
		D \cE_\tau = - \epsilon \ \left(  D ( \chi_\cM \pi_\cM^* f_\cM ) + D ( \chi_\cV \pi_\cV^* f_\cV ) \right).
	\end{equation}
	By the hypotheses of the lemma, we can assume that $|D \cE_\tau|$ is uniformly bounded below by a positive number on $(U_\cM - \widetilde{U}_\cM) \cup (U_\cV - \widetilde{U}_\cV)$, and thus we can choose $\epsilon > 0$ so that
	\begin{equation}
		\epsilon < \tfrac{1}{2} \min \left\{ \tfrac{\inf_{ U_\cV - \widetilde{U}_\cV } |D \cE_\tau|}{\sup_{U_\cV - \widetilde{U}_\cV} | D ( \chi_\cV \pi_\cV^* f_\cV )|}, \tfrac{\inf_{ U_\cM - \widetilde{U}_\cM } |D \cE_\tau|}{\sup_{U_\cM - \widetilde{U}_\cM} |D ( \chi_\cM \pi_\cM^* f_\cM )|} \right\}. \label{eq:choose_epsilon}
	\end{equation}
	However \cref{eq:choose_epsilon,eq:Crit_Points_Intermediate} cannot hold simultaneously, and thus there can be no critical points of $\cE_\tau^\epsilon$ in $(U_\cM - \widetilde{U}_\cM) \cup (U_\cV - \widetilde{U}_\cV)$.
\end{proof}

\medskip

\subsection{The topology of the configuration space}

In order to be able to use Morse Theory using $\cE_\tau^\epsilon$, in this section, we study the (weak) homotopy type of $\cB$.

\smallskip

Recall that $\cB$ is defined in \cref{eq:config_space} as the quotient of $\cX$ by $\cG_0$, which consists of the gauge transformations that are the identity at an initially chosen base point $x_0 \in \Sigma$. In particular, $H^1 (\Sigma; \Z) \hookrightarrow \cG_0$ as harmonic, $\rU (1)$-valued functions (that vanish at $x_0$). Moreover $\cG \cong \cG_0 \times \rU (1)$. Fix another base point $[\ast] \in \mathbb{CP}^\infty$ and let $\mathrm{Map}^0 (\Sigma , \mathbb{CP}^\infty)_\cL$ be the space of base point preserving maps that pullback the generator of $H^2 (\mathbb{CP}^\infty, \Z)$ to $c_1 (\cL) \in H^2(\Sigma; \Z)$ equipped with the compact open topology. The following result from \cite{Donaldson1990}*{Proposition~5.1.4} computes the weak rational homotopy type of $\cB$.

\begin{lemma}
	There is a weak rational homotopy equivalence $\cB \cong_{\mathbb{Q}} \mathrm{Map}^0 (\Sigma, \mathbb{CP}^\infty)_\cL$.
\end{lemma}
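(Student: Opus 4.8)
The strategy is to identify the homotopy type of the configuration space $\cB = (\cC_\cL \times \Omega_\cL^0)/\cG_0$ by relating it to the standard model for the gauge-theoretic configuration space, and then to invoke the classical result that this model is (rationally) equivalent to a mapping space. First I would note that $\cC_\cL$ is an affine space and hence contractible, so the inclusion $\{\nabla^0\} \hookrightarrow \cC_\cL$ is an equivariant homotopy equivalence; similarly $\Omega_\cL^0$ retracts onto the unit sphere $S(\Omega_\cL^0)$ in the (infinite-dimensional) space of sections, which is weakly contractible, but the point is that the quotient by $\cG_0$ remembers the topology of how the gauge group acts. The right way to phrase this: $\cB$ is weakly homotopy equivalent to $(\Omega_\cL^0 \setminus \{0\})/\cG_0$ via the contractibility of the connection factor, and this is precisely the Donaldson--Uhlenbeck--type model whose weak homotopy type is computed in \cite{Donaldson1990}*{Proposition~5.1.4}.

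**Key steps.** (1) Use that $\cC_\cL$ is affine (modeled on $i\Omega^1$, or on $\Omega^1_{\rd^*}$ after Coulomb gauge-fixing) and that $\cG_0$ acts on the product with the connection factor being a free affine-type action on the component that only sees $H^1(\Sigma;\Z)$; extract a fibration-type or deformation-retraction argument reducing $\cB$ to a quotient not involving the connections. Here one must be a little careful because $\cG_0$ does not act on $\cC_\cL$ through a linear action — but the translation part $\gamma \rd \gamma^{-1}$ lands in closed forms, so one can still deformation retract $\cB$ onto the subspace where the connection is $\nabla^0$ plus a harmonic representative, i.e.\ onto something fibered over $\cM \cong H^1(\Sigma;\rl)/H^1(\Sigma;\Z)$. (2) Identify the resulting space with $B\cG_0$-twisted data, and recognize that after inverting the reducible locus (removing $\upphi \equiv 0$ is not needed globally, only that the $\cG_0$-action has the right homotopy quotient) the homotopy quotient $(\Omega_\cL^0)_{h\cG_0}$ is what appears; since $\Omega_\cL^0$ is contractible, $\cB \simeq B\cG_0$ weakly, and $B\cG_0$ classifies $\rU(1)$-bundles with a framing at $x_0$ and a fixed Chern class, which is exactly $\mathrm{Map}^0(\Sigma, \mathbb{CP}^\infty)_\cL$. (3) Invoke \cite{Donaldson1990}*{Proposition~5.1.4} to get the \emph{rational} weak equivalence as stated (the rational qualifier being there because one typically only controls the map on rational homotopy/homology via the standard argument comparing Postnikov towers or via the known rational homotopy of mapping spaces into $\mathbb{CP}^\infty$).

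**Main obstacle.** The delicate point is step (1): the action of $\cG_0$ on $\cC_\cL \times \Omega_\cL^0$ is not free (constant gauge transformations fix $(\nabla,0)$, which is why $\cG_0$ rather than $\cG$ is used), and even with $\cG_0$ the quotient is a genuine quotient of infinite-dimensional spaces, so one must argue that it has the homotopy type of a CW complex and that the naive ``contract the connections'' move is legitimate at the level of the quotient. I expect the cleanest route is to present $\cB$ as the total space of the homotopy quotient $\Omega_\cL^0 /\!\!/ \cG_0$ — using that $\cC_\cL$ is contractible and $\cG_0$ acts on it with contractible (indeed, affine) homotopy quotient over $\cM$, so that the Borel construction collapses — and then cite \cite{Donaldson1990} verbatim, since the entire content of loc.\ cit.\ is precisely this computation for the anti-self-dual setting, and the surface/line-bundle case is formally identical (indeed simpler, as $\mathbb{CP}^\infty = B\rU(1)$). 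So in the write-up I would keep this lemma's proof to a few lines: assert the contractibility of the connection factor, reduce to $B\cG_0 \simeq \mathrm{Map}^0(\Sigma,\mathbb{CP}^\infty)_\cL$, and point to \cite{Donaldson1990}*{Proposition~5.1.4} for the rational weak equivalence, noting that the argument there applies mutatis mutandis.
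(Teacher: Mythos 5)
Your proposal is correct and takes essentially the same route as the paper: the paper gives no independent argument for this lemma, citing \cite{Donaldson1990}*{Proposition~5.1.4} directly, and your reduction --- $\cC_\cL \times \Omega_\cL^0$ is contractible, $\cG_0$ acts freely (via its free action on the connection factor) with local slices, hence $\cB \simeq B\cG_0 \simeq \mathrm{Map}^0(\Sigma, \mathbb{CP}^\infty)_\cL$ weakly --- is exactly the standard argument behind that citation. One small correction: the intermediate claim in your step (1) that $\cB$ is weakly equivalent to $\left( \Omega_\cL^0 \setminus \{ 0 \} \right)/\cG_0$, together with the digression about retracting onto harmonic connections fibered over $\cM$, should be dropped (the $\cG_0$-action on sections alone is not free, and the correct reduction discards the contractible \emph{section} factor while keeping the connections, on which $\cG_0$ does act freely); your step (2) Borel-construction argument already supersedes this and is the right formulation.
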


Thus we have the following result as well.

\begin{corollary}
\label{corollary:B_homotopy}
	There is a rational weak homotopy equivalence $\cB \cong_\mathbb{Q} K (H^1(\Sigma , \Z) , 1)$.
\end{corollary}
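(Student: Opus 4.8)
The plan is to combine the preceding lemma, identifying $\cB$ rationally with $\mathrm{Map}^0(\Sigma,\mathbb{CP}^\infty)_\cL$, with a direct analysis of the (weak) homotopy type of this mapping space. The key observation is that $\mathbb{CP}^\infty = K(\Z,2)$, so a base-point-preserving map $\Sigma \to \mathbb{CP}^\infty$ is, up to homotopy, the same as a class in $H^2(\Sigma;\Z)$; restricting to the component $\mathrm{Map}^0(\Sigma,\mathbb{CP}^\infty)_\cL$ just fixes that class to be $c_1(\cL)$. The homotopy groups of a component of $\mathrm{Map}(Y,K(\pi,n))$ are computed by the standard formula $\pi_j\bigl(\mathrm{Map}(Y,K(\pi,n))\bigr) \cong H^{n-j}(Y;\pi)$ (and likewise for based maps with reduced cohomology), which goes back to Thom and Federer. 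First I would apply this with $Y=\Sigma$, $\pi=\Z$, $n=2$: for based maps one gets $\pi_j\bigl(\mathrm{Map}^0(\Sigma,\mathbb{CP}^\infty)_\cL\bigr)\cong \widetilde H^{2-j}(\Sigma;\Z)$, which vanishes for $j\geqslant 2$ (since $\widetilde H^{0}$ and $\widetilde H^{<0}$ of a connected surface vanish — for $j=2$ we need $\widetilde H^0(\Sigma;\Z)=0$, true since $\Sigma$ is connected), equals $H^1(\Sigma;\Z)$ for $j=1$, and the $j=0$ statement is subsumed by having fixed the component. Hence the chosen component is a $K\bigl(H^1(\Sigma;\Z),1\bigr)$ up to weak homotopy equivalence.

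The remaining point is bookkeeping about the word "rational": the lemma only gives a \emph{rational} weak equivalence $\cB\cong_\mathbb{Q}\mathrm{Map}^0(\Sigma,\mathbb{CP}^\infty)_\cL$, so the conclusion is only that $\cB$ is rationally weakly equivalent to $K(H^1(\Sigma;\Z),1)$ — which is exactly what is stated. Concretely, I would note that $H^1(\Sigma;\Z)\cong\Z^{2\,\mathrm{genus}(\Sigma)}$ is free abelian, so $K(H^1(\Sigma;\Z),1)\simeq (S^1)^{2\,\mathrm{genus}(\Sigma)}$ is a torus, a space whose rational homotopy and homology are already completely transparent; thus composing the two rational weak equivalences (the one from the lemma and the integral one established in the previous paragraph, which is in particular a rational one) yields the claimed $\cB\cong_\mathbb{Q} K(H^1(\Sigma;\Z),1)$.

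The step I expect to require the most care is the identification of the mapping space component with an Eilenberg--MacLane space, i.e.\ the correct invocation of the Thom--Federer computation of $\pi_*\mathrm{Map}$. One must be careful that: (i) the formula applies componentwise and the component is the one pulling back the generator to $c_1(\cL)$; (ii) for \emph{based} maps into $K(\Z,2)$ the relevant groups are the \emph{reduced} cohomology groups $\widetilde H^{2-j}(\Sigma;\Z)$, and in particular the potentially worrisome $j=2$ term is $\widetilde H^0(\Sigma;\Z)=0$ because $\Sigma$ is connected, while all higher homotopy groups vanish for degree reasons; (iii) the action of $\pi_1$ on higher $\pi_j$ is automatically trivial here since all higher groups vanish, so no nilpotence or local-coefficient subtleties arise and the space is genuinely an (ordinary) $K(\pi,1)$. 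Once these are pinned down, the corollary follows immediately from the lemma by transitivity of rational weak equivalence.
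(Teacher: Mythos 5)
Your argument is correct, but it follows a genuinely different route from the paper. You compute the based mapping space directly: using the (integral) Thom--Federer identification $\pi_j\bigl(\mathrm{Map}^0(\Sigma,\mathbb{CP}^\infty)_\cL\bigr)\cong \widetilde{H}^{2-j}(\Sigma;\Z)$, you conclude at once that the relevant component is a $K\bigl(H^1(\Sigma;\Z),1\bigr)$, and the rationalization enters only through the preceding lemma identifying $\cB$ with this mapping space up to rational weak equivalence. The paper instead applies Thom's theorem only to the \emph{free} mapping space, obtaining $\mathrm{Map}(\Sigma,\mathbb{CP}^\infty)\cong_\mathbb{Q} K(H^0,2)\times K(H^1,1)\times K(H^2,0)$, and then runs the long exact sequence of rational homotopy groups for the evaluation fibration $\mathrm{Map}^0\to\mathrm{Map}\to\mathbb{CP}^\infty$, checking by an explicit section through constant maps that the evaluation is surjective (hence bijective) on $\pi_2^{\mathbb{Q}}$, which kills $\pi_2^{\mathbb{Q}}(\mathrm{Map}^0)$ and identifies $\pi_1^{\mathbb{Q}}(\mathrm{Map}^0)$ with $\pi_1^{\mathbb{Q}}(\mathrm{Map})$. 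Your version is shorter and actually proves a stronger, integral statement about the mapping space component, at the cost of invoking the based-maps form of the Federer computation; the one point you should make explicit is why that formula is valid at the component labelled by $c_1(\cL)$ rather than just at the constant map -- for instance because $\mathbb{CP}^\infty$ can be modelled by a topological abelian group, so the based mapping space is an H-group whose components are all homotopy equivalent. The paper's fibration argument avoids this input entirely and stays within the unbased rational statement, which is why it needs the extra bookkeeping in degrees $1$ and $2$. With that justification supplied, your proof is complete and yields the corollary by transitivity of rational weak equivalence, exactly as you say.
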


\begin{proof}
	The strategy we follow uses the long exact sequence of homotopy groups induced by the fibration
	\begin{equation}\label{eq:Fibration_B}
		\begin{matrix}
			\mathrm{Map}^0(\Sigma,\mathbb{CP}^\infty) & \rightarrow & \mathrm{Map}(\Sigma,\mathbb{CP}^\infty) \\
			& & \downarrow \\
			& & \mathbb{CP}^\infty,
		\end{matrix}
	\end{equation}
	and a theorem of Ren\'e Thom to compute $\mathrm{Map}(\Sigma,\mathbb{CP}^\infty)$. This says that for $m \in \mathbb{N}$
	\begin{equation}\label{eq:Thom}
		\mathrm{Map}(\Sigma, K (\Z, m)) ) \cong_{\mathbb{Q}} \prod_{j=0}^l K (H^j(\Sigma; \Z) , m-j ).
	\end{equation}
	As $\mathbb{CP}^\infty = K (\Z, 2) $, when applied to the case at hand we find
	\begin{equation}\label{eq:Map Sigma CP}
		\mathrm{Map}(\Sigma, \mathbb{CP}^\infty ) \cong_{\mathbb{Q}} K (H^0 (\Sigma; \Z),2) \times K (H^1(\Sigma; \Z),1) \times K (H^2(\Sigma; \Z),0).
	\end{equation}
	Then the long exact sequence in rational homotopy groups induced by the fibration \eqref{eq:Fibration_B} gives $\pi_k^{\mathbb{Q}}(\mathrm{Map}^0(\Sigma,\mathbb{CP}^\infty )) = \pi_k^{\mathbb{Q}} (\mathrm{Map}(\Sigma,\mathbb{CP}^\infty ))$ for $k \neq 1,2$. On the other hand, for these values of $k$ we find instead that
	\begin{align}
		0 \rightarrow \pi_2^{\mathbb{Q}}(\mathrm{Map}^0) \xrightarrow{i} \pi_2^{\mathbb{Q}}( \mathrm{Map} ) \xrightarrow{\mathrm{ev}} \pi_2^{\mathbb{Q}} (\mathbb{CP}^\infty) \rightarrow \pi_1^{\mathbb{Q}} ( \mathrm{Map}^0 ) \xrightarrow{j} \pi_1^{\mathbb{Q}}( \mathrm{Map} ) \rightarrow 0 ,
	\end{align}
	where $\mathrm{Map} \eqdef \mathrm{Map}(\Sigma,\mathbb{CP}^\infty)$ and $\mathrm{Map}^0 \eqdef \mathrm{Map}^0(\Sigma,\mathbb{CP}^\infty)$. From this we now prove that $\rm{ev}$ is an isomorphism. Indeed, if the map $g : S^2 \rightarrow \mathbb{CP}^\infty$ generates $\pi_2(\mathbb{CP}^\infty )$, we can consider the map $\widetilde{g}: S^2 \rightarrow \mathrm{Map}(\Sigma,\mathbb{CP}^\infty)$ which for $s \in S^2$ yields the constant map 
	\begin{equation}
		\widetilde{g}_s : \Sigma \rightarrow \mathbb{CP}^\infty,
	\end{equation}
	with $\widetilde{g}_s(x)=g(s)$ for all $x \in \Sigma$. Then $\mathrm{ev} (\widetilde{g}) \eqdef \mathrm{ev} \circ \widetilde{g} = g$ and so
	\begin{equation}
		\mathrm{ev} : \pi_2^{\mathbb{Q}} \left( \mathrm{Map} \right) \rightarrow \pi_2^{\mathbb{Q}} \left( \mathbb{CP}^\infty \right) \cong \mathbb{Q},
	\end{equation}
	is surjective. Given that $\pi_n(X \times Y) \cong \pi_n(X) \times \pi_n(Y)$ for any topological spaces $X,Y$, we find from \cref{eq:Map Sigma CP} that $\pi_2^{\mathbb{Q}} ( \mathrm{Map} ) \cong \mathbb{Q}$. Hence,
	\begin{equation}
		\pi_2^{\mathbb{Q}} \left( \mathrm{Map} \right) \cong \mathbb{Q} \cong \pi_2^{\mathbb{Q}} \left( \mathbb{CP}^\infty \right),
	\end{equation}
	$\mathrm{ev}$ is therefore also injective and so
	\begin{equation}
		\pi_2^{\mathbb{Q}} \left( \mathrm{Map}^0 \right) = 0.
	\end{equation}
	Finally, we conclude that $\pi_1^{\mathbb{Q}}(\mathrm{Map}^0) \cong \pi_1^{\mathbb{Q}} ( \mathrm{Map} )$ which together with the above gives 
	\begin{equation}
		\mathrm{Map}^0 \cong_\mathbb{Q} K \left( H^1 \left( \Sigma; \Z \right), 1 \right) \times K \left( H^2 \left( \Sigma; \Z \right), 0 \right).
	\end{equation}
	Notice that the implied fact that $\pi_0 \left( \mathrm{Map}^0 \left( \Sigma, \mathbb{CP}^\infty \right) \right) = H^2 \left( \Sigma; \Z \right)$, constitutes the statement that $\cL$ is topologically determined by $c_1(\cL)$ and so
	\begin{equation}\label{eq:Homotopy_Type_B}
		\cB \cong_\mathbb{Q} \mathrm{Map}^0(\Sigma, \mathbb{CP}^\infty )_\cL \cong_\mathbb{Q} K (H^1(\Sigma; \Z) , 1),
	\end{equation}
	as claimed in the statement.
\end{proof}

\medskip

\subsection{New Ginzburg--Landau fields}

In this subsection we complete the argument showing the existence of other Ginzburg--Landau fields than those in $\cM$ or $\cV$. 

\begin{proof}[Proof of \Cref{Mtheorem:nonminimal}:]
	Arguing by contradiction, suppose that the only critical points of $\cE_\tau$ are those in $\cM \cup \cV$. In particular, \Cref{theorem:perturbed_E_tau} applies. Then we have that $\cE_\tau (\cV) < \cE_\tau (\cM)$ and there is a perturbation $\cE_\tau^\epsilon$ as above with
	\begin{equation}
		\sup_\cV \cE_\tau^\epsilon < \inf_\cM \cE_\tau^\epsilon,
	\end{equation}
	and whose critical points coincide with those of the perturbations $f_\cM$ and $f_\cV$ on $\cM$ and $\cV$, respectively. Furthermore, by construction the function $\cE_\tau^\epsilon$ is Morse and we can perturb the metric so that the resulting pair is Morse--Smale (that is, the descending flow lines intersect transversely, cf. \cite{Abbondandolo2006}*{Section~2.12}, and the function is Palais--Smale). Hence, its Morse--Witten complex must compute the singular cohomology of $\cB$. However, by \Cref{lemma:Hessian_Maxwell}, we know that for any integer $N$, there is $\tau_N>0$ such that for all $\tau > \tau_N$ the index of $\cM$ is at least $N$ which then implies that the index of $\cE_\tau^\epsilon$ at any of the critical points in $\cM$ is at least $N$. Indeed, the index does not decrease under sufficiently small perturbations and so we can choose $\epsilon$ small enough so that the number of negative eigenvalues of $\Hess \left( \cE_\tau \right)$ and $\Hess \left( \cE_\tau^\epsilon \right)$ at the critical points of $\cE_\tau^\epsilon$ are the same. On the other hand, as $\cE$ attains its absolute minimum at $\cV$, the index of any of the critical points of $\cE_\tau^\epsilon$ in $\cV$ coincides with the index of $f_\cM$ which is at most $2d+1=\dim_\rl(\cV)$. Given that for any positive $\delta < \cE_\tau (\cM) - \cE_\tau (\cV)$ we have a retraction
	\begin{equation}
		\cE_\tau^{-1} (- \infty , \cE_\tau (\cV) + \delta] \cong \cV.
	\end{equation}
	Thus, the top degree cohomology class of $\cV$ induces a nonzero class in the degree $2d+1$ Morse--Witten cohomology of $\cE_\tau^{-1} (- \infty , \cE_\tau (\cV) + \delta]$. Thus, there is a closed $c_{2d+1} \in C^*_{MW} \left( \cE_\tau \right)$ of the Morse--Witten complex of $\cE_\tau^{-1}(- \infty , \cE_\tau (\cV) + \delta]$ which does not vanish in cohomology and so defines a nontrivial class
	\begin{equation}
		[c_{2d+1}] \in H^{2d+1}_{MW}(\cE_\tau, \cE_\tau^{-1} (- \infty , \cE_\tau (\cV)+ \delta] ).
	\end{equation}
	However, by \Cref{corollary:B_homotopy}, $\cB \cong_\mathbb{Q} K (H^1(\Sigma, \mathbb{Z}),1)$, which has trivial cohomology in degrees above $2 \: \mathrm{genus} \left( \Sigma \right)$. Hence, if $d \geqslant \mathrm{genus} \left( \Sigma \right)$, the class $[c_{2d+1}]$ must vanish in the Morse--Witten cohomology of $\cB$, that is
	\begin{equation}
		[c_{2d+1}] = 0 \in H^{2d+1}_{MW}(\cE_\tau,\cB).
	\end{equation}
	Hence, there must exist $c_{2d+2} \in C^{2d+2}(\cE_\tau, \cB)$ such that $\del c_{2d+2}=c_{2d+1}$ which is impossible if $N>2d+2$. This contradicts the hypothesis that there are no other critical points of $\cE_\tau$ other than those in $\cM \cup \cV$. Iterating this procedure we deduce the existence of at least $k=N-(2d+2)$ other critical points of $\cB$. 
\end{proof}

\bigskip

\section{Instability of nonminimal solutions}
\label{sec:instability}

In this section we prove \Cref{Mtheorem:instability}.

\smallskip

Let $d \eqdef c_1 (\cL) [\Sigma] \in \Z$ which (without any loss of generality) we assume to be nonnegative. Let $\Lambda$ be the contraction of 2-forms with $\vol_g$. Recall from \cite{B90}*{Corollary~2.3} that if $\tau_{\mathrm{Bradlow}} = \tfrac{4 \pi d}{\Ar \left( \Sigma, g \right)}$ and $\tau > \tau_{\mathrm{Bradlow}}$, then the absolute minimizers of the Ginzburg--Landau energy \eqref{eq:gle}, called vortex fields, are characterized by the \emph{vortex equations}
\begin{subequations}
	\begin{align}
		i \Lambda F_\nabla	&= \tfrac{1}{2} \left( \tau - |\upphi|^2 \right), \label{eq:vortex_1} \\
		\delbar_\nabla \upphi	&= 0. \label{eq:vortex_2}
	\end{align}
\end{subequations}
In \cite{N17b}*{Main Theorem~2} the first author showed that when $\tau \leqslant \tau_{\mathrm{Bradlow}}$, then the only critical points of the Ginzburg--Landau energy \eqref{eq:gle} are the normal phase solutions, which in this case are also absolute minimizers.

Absolute minimizers are necessarily \emph{stable}. We now show that when $\tau > 0$, then other critical points (for example, the ones given in \Cref{Mtheorem:nonminimal}) are necessarily \emph{unstable}. Recall that we call a critical point \emph{irreducible}, if $\upphi$ is not (identically) zero.

\smallskip

\begin{proof}[Proof of \Cref{Mtheorem:instability}:]
	The first claim was proved in \cite{N17b}*{Main Theorem~2}.

	Let us assume that $\tau > \tau_{\mathrm{Bradlow}}$. By \cite{B90}*{Proposition~2.1}, we have the following ``Bogomolny trick'' for \emph{all} $\left( \nabla, \upphi \right)$:
	\begin{align}
		\cE_\tau \left( \nabla, \upphi \right)	&= \int\limits_\Sigma \left( 2 |\delbar_\nabla \upphi|^2 + \left| i \Lambda F_\nabla - \tfrac{1}{2} \left( \tau - |\upphi|^2 \right) \right|^2 \right) \vol_g + 2 \pi \tau d. \label{eq:Bogomolny_trick}
	\end{align}
	This equality proves that solutions of the vortex \cref{eq:vortex_1,eq:vortex_2} are, in fact, absolute minimizers of the Ginzburg--Landau energy \eqref{eq:gle}.
	
	Let $\left( \nabla, \upphi \right)$ now be an irreducible critical point that is not a solution to the vortex \cref{eq:vortex_1,eq:vortex_2}. We now construct energy-decreasing directions for $\left( \nabla, \upphi \right)$. In order to do that let us investigate the following linear, elliptic PDE for $(a, \uppsi) \in i \Omega^1 \times \Omega_\cL^0$:
	\begin{subequations}
		\begin{align}
			\left( - i \Lambda \rd + \rd^* \right) a - h \left( \upphi, \uppsi \right)	&= 0, \label{eq:fake-tangent_1} \\
			\sqrt{2} \left( \delbar_\nabla \uppsi + a^{0,1} \upphi \right)			&= 0. \label{eq:fake-tangent_2}
		\end{align}
	\end{subequations}
	The factor of $\sqrt{2}$ is needed for later purposes. Let the space of solutions of \cref{eq:fake-tangent_1,eq:fake-tangent_2} be $\cT_{\left( \nabla, \upphi \right)}$. We remark that when $\left( \nabla, \upphi \right)$ is a vortex field, then \cref{eq:fake-tangent_1,eq:fake-tangent_2} are exactly the linearizations of the vortex \cref{eq:vortex_1,eq:vortex_2} with the Coulomb-type gauge fixing condition that $(a, \uppsi)$ is $L^2$-orthogonal to the gauge orbit through $\left( \nabla, \upphi \right)$.

	We show three things to complete the proof:
	\begin{enumerate}

		\itemsep0em
		\parskip0em
		
		\item $\cT_{\left( \nabla, \upphi \right)}$ has (real) dimension at least $2 d + 2$. In particular, $\cT_{\left( \nabla, \upphi \right)}$ is nontrivial.

		\item $\cT_{\left( \nabla, \upphi \right)}$ can be equipped with the structure of a complex vector space.
		
		\item Each complex line in $\cT_{\left( \nabla, \upphi \right)}$ has a real line that is a (strictly) energy-decreasing direction, meaning that (for $t$ small enough):
		\begin{equation}
			\cE_\tau \left( \nabla + t a, \upphi + t \uppsi \right) < \cE_\tau \left( \nabla, \upphi \right). \label[ineq]{ineq:instability}
		\end{equation}		
	\end{enumerate}
	
	Let us write \cref{eq:fake-tangent_1,eq:fake-tangent_2} as a single equation of the form
	\begin{equation}
		\bL_{\left( \nabla, \upphi \right)} (a, \uppsi) = 0,
	\end{equation}
	where $\bL_{\left( \nabla, \upphi \right)}$ is a first order and elliptic differential operator. In particular, $\bL_{\left( \nabla, \upphi \right)}$ is Fredholm, and we claim that its (real) Fredholm index is exactly $2 d$. In order to prove this claim, note that $\bL_{\left( \nabla, \upphi \right)}$ is a compact perturbation of the Fredholm operator $\bL_{(\nabla, 0)} = \left( - i \Lambda \rd + \rd^*, \sqrt{2} \delbar_\nabla \right)$. As $\bL_{(\nabla, 0)}$ is a direct sum of two Fredholm operators, its Fredholm index is the sum of the Fredholm indices of the operators
	\begin{align}
		L_1 &\eqdef - i \Lambda \rd + \rd^* : i \Omega^1 \rightarrow L^2 \left( X; \cx \right), \\
		L_2 &\eqdef \delbar_\nabla : \Omega_\cL^0 \rightarrow L^2 \left( X; \cL \otimes K_\Sigma^{- 1} \right).
	\end{align}
	It is easy to see that the kernel of $L_1$ consists of harmonic 1-form, while its cokernel consists of constant, complex functions only, thus
	\begin{equation}
		\index_\rl \left( L_1 \right) = 2 \: \index_\cx \left( L_1 \right) = 2 \left( \mathrm{genus} \left( \Sigma \right) - 1 \right).
	\end{equation}
	Finally, the kernel of $L_2$ consists of holomorphic sections of $\cL$, while its cokernel consists of anti-holomorphic section of $\cL \otimes K_\Sigma^{- 1}$, thus has complex dimension $h^0 \left( \cL^{- 1} \otimes K_\Sigma \right)$. Hence the (complex) index of $L_2$ is $h^0 (\cL) - h^0 \left( \cL^{- 1} K_\Sigma \right)$, and thus, by the Riemann--Roch Theorem, we get
	\begin{equation}
		\index_\rl \left( L_2 \right)	= 2 \left( h^0 (\cL) - h^0 \left( \cL^{- 1} \otimes K_\Sigma \right) \right) = 2 \left( d + 1 - \mathrm{genus} \left( \Sigma \right) \right).
	\end{equation}
	Thus
	\begin{align}
		\index_\rl \left( \bL_{\left( \nabla, \upphi \right)} \right)	&= \index_\rl \left( \bL_{(\nabla, 0)} \right) \\
															&= \index_\rl \left( L_1 \right) + \index_\rl \left( L_2 \right) \\
															&= 2 \left( \mathrm{genus} \left( \Sigma \right) - 1 \right) + 2 \left( d + 1 - \mathrm{genus} \left( \Sigma \right) \right) \\
															&= 2 d.
	\end{align}
	Hence $\cT_{\left( \nabla, \upphi \right)}$ is necessarily nontrivial if $d > 0$. Moreover, the Ginzburg--Landau \cref{eq:gl1,eq:gl2} imply that the pair $(f, \chi) = \left( \tfrac{\tau}{2} + \tfrac{1}{2} |\upphi|^2 - i \Lambda F_\nabla, \sqrt{2} \delbar_\nabla \upphi \right)$ is in the cokernel of $\bL_{\left( \nabla, \upphi \right)}$ (this is where the factor of $\sqrt{2}$ \cref{eq:fake-tangent_2} was needed). Furthermore, this pair is not zero, since $\left( \nabla, \upphi \right)$ is not a vortex field, so the cokernel of $\bL_{\left( \nabla, \upphi \right)}$ is also nontrivial. Hence $\cT_{\left( \nabla, \upphi \right)}$ has (real) dimension at least $2 d + 1 > 0$.
	
	Next we define the map $I$ acting on $(a, \uppsi) \in \cT_{\left( \nabla, \upphi \right)}$ as
	\begin{equation}
		I (a, \uppsi) \eqdef (\ast a, i \uppsi). \label{eq:cx_str}
	\end{equation}
	Using that $\ast^2 a = - a$, $(\ast a)^{0, 1} = i a^{0, 1}$, $\rd^* a = - \ast \rd \ast a$, $\ast \rd a = \ast \rd a$, and $h (i \uppsi, \upphi) = - i h (\uppsi, \upphi)$, we see that $I$ preserves $\cT_{\left( \nabla, \upphi \right)}$, and $I^2 = - \id_{\cT_{\left( \nabla, \upphi \right)}}$. Thus $I$ is a complex structure on $\cT_{\left( \nabla, \upphi \right)}$. Since $\cT_{\left( \nabla, \upphi \right)}$ is complex and has real dimension at least $2 d + 1$, it, in fact, has real dimension at least $2 d + 2$. This proves the first two bullet points.

	For any $(a, \uppsi) \in i \Omega^1 \times \Omega_\cL^0$ let us inspect the difference
	\begin{equation}
		\delta \cE (t) \eqdef \cE_\tau \left( \nabla + t a, \upphi + t \uppsi \right) - \cE_\tau \left( \nabla, \upphi \right).
	\end{equation}
	The $O (t)$ term vanishes, because $\left( \nabla, \upphi \right)$ is a critical point. The $O (t^2)$ term can be computed using \cref{eq:Bogomolny_trick} as
	\begin{equation}
	\begin{aligned}
		\lim\limits_{t \rightarrow 0} \frac{\delta \cE (t)}{t^2} &= \int\limits_\Sigma \left( 2 \left| \delbar_\nabla \uppsi + a^{0,1} \upphi \right|^2 + 4 \Re \left( \langle \delbar_\nabla \upphi | a^{0,1} \uppsi \rangle \right) \right) \ \vol_g \\
			& \quad + \int\limits_\Sigma \left( \left( i \Lambda \rd a + \Re \left( h (\uppsi, \upphi) \right) \right)^2 + \left( i \Lambda F_\nabla - \tfrac{1}{2} \left( \tau - |\upphi|^2 \right) \right) |\uppsi|^2 \right) \ \vol_g. \label{eq:hessian}
	\end{aligned}
	\end{equation}
	
	Let us assume now that $(a, \uppsi) \in \cT_{\left( \nabla, \upphi \right)} - \{ (0, 0) \}$. Then we get
	\begin{equation}
		\lim\limits_{t \rightarrow 0} \frac{\delta \cE (t)}{t^2} = 4 \underbrace{\int\limits_\Sigma \Re \left( \langle \delbar_\nabla \upphi | a^{0,1} \uppsi \rangle \right) \ \vol_g}_{\cI_1} + \underbrace{\int\limits_\Sigma \left( i \Lambda F_\nabla - \tfrac{1}{2} \left( \tau - |\upphi|^2 \right) \right) |\uppsi|^2 \ \vol_g}_{\cI_2}.
	\end{equation}
	The first term, $\cI_1$, is not invariant under the action of $\rU (1)$ on $\cT_{\left( \nabla, \upphi \right)}$, instead if $\mu \in \rU (1)$, then 
	\begin{equation}
		\Re \left( \langle \delbar_\nabla \upphi | (\mu a)^{0, 1} (\mu \uppsi) \rangle \right) = \Re \left( \mu^2 \langle \delbar_\nabla \upphi | a^{0,1} \uppsi \rangle \right).
	\end{equation}
	Thus if $\int_\Sigma \langle \delbar_\nabla \upphi | a^{0,1} \uppsi \rangle \vol_g = r \exp (i \theta)$, with $r \geqslant 0$, then let $\mu = \exp \left(i (\pi - \theta)/2 \right)$, and change $(a, \uppsi)$ to $\mu (a, \uppsi)$. With this $\cI_1 = - r \leqslant 0$.
	
	As opposed to $\cI_1$, the second term, $\cI_2$, is invariant under the action of $\rU (1)$, and since $\left( \nabla, \upphi \right)$ is irreducible, but not a vortex field, we have by \cite{N17b}*{Lemma~4.1} that
	\begin{equation}
		i \Lambda F_\nabla - \tfrac{1}{2} \left( \tau - |\upphi|^2 \right) < 0,
	\end{equation}
	holds everywhere on $\Sigma$. Finally, note that since both $\upphi$ and $(a, \uppsi)$ are both smooth and nonzero, then, using \cref{eq:fake-tangent_1,eq:fake-tangent_2}, we can show that $\uppsi$ is also nonzero. Thus $\cI_2$ is strictly negative, which completes the proof of \cref{ineq:instability}, and hence of the theorem.
\end{proof}

We learned the proof of the last theorem of this section from Da Rong Cheng, who in turn claims that the key trick in the proof is rather well-known among experts of minimal submanifolds. In any case, we claim no ownership of the following result, but present it for the sake of completeness.

\begin{theorem}
\label{theorem:index_bound}
	Under the hypotheses above, let $\left( \nabla, \upphi \right)$ be an irreducible critical point that is not a vortex field. Then the (real) Morse-index of the Ginzburg--Landau energy \eqref{eq:gle} at $\left( \nabla, \upphi \right)$ is at least $d + 1$.
\end{theorem}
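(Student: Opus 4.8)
The plan is to reuse the linear-algebraic structure of the space $\cT \eqdef \cT_{(\nabla, \upphi)}$ of solutions of \cref{eq:fake-tangent_1,eq:fake-tangent_2} that was already extracted in the proof of \Cref{Mtheorem:instability}, and then upgrade the ``one negative direction'' argument to a lower bound on the dimension of a negative-definite subspace. Recall from that proof the following facts: (i) $\cT$ is a complex vector space with respect to $I(a, \uppsi) = (\ast a, i \uppsi)$, of complex dimension at least $d + 1$; (ii) the imaginary part of \cref{eq:fake-tangent_1} is exactly the Coulomb-type condition that $(a, \uppsi)$ be $L^2$-orthogonal to the gauge orbit through $(\nabla, \upphi)$, so that $\cT$ embeds into the local model of $\cB$ at $[(\nabla, \upphi)]$ (here irreducibility of $(\nabla, \upphi)$ is used); and (iii) on $\cT$ the second variation of $\cE_{\tau, \kappa_{\mathrm{c}}}$ at $(\nabla, \upphi)$ is the quadratic form $Q \eqdef 4 \cI_1 + \cI_2$, where $\cI_1$ and $\cI_2$ are the two integrals appearing in the proof of \Cref{Mtheorem:instability}. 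In particular $\cI_2 (a, \uppsi) = \int_\Sigma \left( i \ast F_\nabla - \tfrac{1}{2}(\tau - |\upphi|^2) \right) |\uppsi|^2 \, \vol_g$, which by \cite{N17b}*{Lemma~4.1} together with the fact that every nonzero element of $\cT$ has $\uppsi \not\equiv 0$ is \emph{negative definite} on $\cT$.

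The only new input I would isolate is the behaviour of the ``bad'' term under the complex structure: as computed in the proof of \Cref{Mtheorem:instability}, applying $I$ turns $a^{0,1} \uppsi$ into $-a^{0,1} \uppsi$, hence $\cI_1 \circ I = - \cI_1$. This anti-invariance forces the signature of the symmetric bilinear form associated to $\cI_1$ to be balanced: its null space $N \eqdef \{ v \in \cT : \cI_1(v, \cdot) \equiv 0 \}$ is $I$-invariant, and on $\cT / N$ the map $I$ carries a maximal positive subspace of $\cI_1$ isomorphically onto a maximal negative one, so these have the same dimension, say $p$, with $2p = \dim_\rl(\cT/N)$. Lifting a maximal negative subspace of $\cT/N$ and adding $N$ produces a real subspace $L \subseteq \cT$ with $\cI_1|_L \leqslant 0$ and
\[
	\dim_\rl L \;=\; p + \dim_\rl N \;=\; \tfrac{1}{2}\dim_\rl \cT + \tfrac{1}{2}\dim_\rl N \;\geqslant\; \dim_\cx \cT \;\geqslant\; d+1 .
\]

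The conclusion is then immediate: on $L$ we have $Q(a,\uppsi) = 4 \cI_1(a,\uppsi) + \cI_2(a,\uppsi) \leqslant \cI_2(a,\uppsi) < 0$ for every nonzero $(a,\uppsi)$, so $Q$ is negative definite on $L$; restricting to a $(d+1)$-dimensional subspace of $L$ and using the embedding of $\cT$ into the local model of $\cB$, the Hessian of $\cE_{\tau, \kappa_{\mathrm{c}}}$ at $(\nabla, \upphi)$ is negative definite on a $(d+1)$-dimensional subspace, which is the asserted index bound. I expect the only genuinely delicate point to be the one in the middle paragraph: since $\cI_1$ is \emph{indefinite} one cannot simply take $L = \cT$, and it is precisely the identity $\cI_1 \circ I = - \cI_1$ that guarantees the negative-semidefinite cone of $\cI_1$ is big enough (at least half of $\dim_\rl \cT$, hence at least $d+1$) for the strict negativity of $\cI_2$ to take over there; everything else is bookkeeping already done in the proof of \Cref{Mtheorem:instability}.
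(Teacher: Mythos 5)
Your proof is correct, and it shares all of the paper's ingredients --- the space $\cT_{\left( \nabla, \upphi \right)}$ of real dimension at least $2d+2$, its complex structure $I$, the restriction of the second variation to $\cT_{\left( \nabla, \upphi \right)}$ as $4\cI_1 + \cI_2$, the strict negativity of $\cI_2$ (which indeed relies on the fact, asserted in the proof of \Cref{Mtheorem:instability}, that nonzero elements of $\cT_{\left( \nabla, \upphi \right)}$ have $\uppsi \not\equiv 0$), and the sign behaviour of $\cI_1$ under phase rotation --- but your final mechanism is genuinely different. The paper never constructs a negative-definite subspace: it takes an orthonormal eigenbasis $x_1, x_2, \ldots$ of the operator $\mathbb{H}$ dual to the Hessian, doubles the span of the first $d$ eigenvectors to the $I$-invariant space $E = \mathrm{span}\left( x_1, I(x_1), \ldots, x_d, I(x_d) \right)$, picks a unit vector $x \in E^\perp \cap \cT_{\left( \nabla, \upphi \right)}$ (nontrivial by the dimension count, and the intersection is $I$-invariant since $I$ is an $L^2$-isometry), and then applies the rotation trick from the proof of \Cref{Mtheorem:instability} to replace $x$ by $(a + bI)x$ so that $\langle x | \mathbb{H}(x) \rangle_\cH < 0$; the min--max characterization then gives $\lambda_{d+1} < 0$. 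You instead use the quadratic-form identity $\cI_1 \circ I = - \cI_1$ to conclude that the signature of $\cI_1$ on $\cT_{\left( \nabla, \upphi \right)}$ is balanced, which produces an $\cI_1$-nonpositive subspace $L \subseteq \cT_{\left( \nabla, \upphi \right)}$ with $\dim_\rl L \geqslant \tfrac{1}{2} \dim_\rl \cT_{\left( \nabla, \upphi \right)} \geqslant d+1$, on which the Hessian form is bounded above by $\cI_2$ and hence negative definite. Your route is more constructive: it exhibits an explicit negative subspace, avoids the eigenbasis of $\mathbb{H}$ and the auxiliary space $E$, and in fact yields the marginally sharper bound $\tfrac{1}{2} \left( \dim_\rl \cT_{\left( \nabla, \upphi \right)} + \dim_\rl N \right)$, where $N$ is the radical of $\cI_1$ on $\cT_{\left( \nabla, \upphi \right)}$; the paper's min--max argument, in exchange, needs no signature analysis at all. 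Two minor points of housekeeping: you should state the (standard) min--max step that a $(d+1)$-dimensional subspace on which the Hessian form is negative definite forces at least $d+1$ negative eigenvalues of $\mathbb{H}$, and the aside about embedding $\cT_{\left( \nabla, \upphi \right)}$ into the local model of $\cB$ is not needed, since the paper measures the index upstairs through $\mathbb{H}$ acting on pairs.
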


\begin{proof}
	Let $\cH$ be the $L^2$-completion of the real pre-Hilbert space of pairs of imaginary-valued 1-forms and section of $\cL$, and let $\mathbb{H}$ be the densely defined, self-adjoint operator that is the metric dual of the Hessian in \cref{eq:hessian}. By elliptic regularity, $\mathbb{H}$ has an orthonormal eigenbasis, $\{ x_1, x_2, \ldots \}$. From \cref{eq:hessian} it also follows that the spectrum of $\mathbb{H}$ is bounded from below. Let us label the eigenvectors, so that the corresponding eigenvalues satisfy that $\lambda_1 \leqslant \lambda_2 \ldots$, and let
	\begin{equation}
		E \eqdef \mathrm{span} \left( x_1, I (x_1), x_2, I (x_2), \ldots, x_d, I (x_d) \right) \cong \rl^{2 d + 2}.
	\end{equation}
	Then we get that
	\begin{align}
		\lambda_{d + 1}	&= \inf \left( \left\{ \ \frac{\langle x | \mathbb{H} (x) \rangle_\cH}{\| x \|_\cH^2} \ \middle| \ x \in \left( \mathrm{span} \left( x_1, x_2, \ldots, x_d \right) \right)^\perp - \{ 0 \} \ \right\} \right) \\
			&\leqslant \inf \left( \left\{ \ \frac{\langle x | \mathbb{H} (x) \rangle_\cH}{\| x \|_\cH^2} \ \middle| \ x \in E^\perp - \{ 0 \} \ \right\} \right).
	\end{align}
	Since $\cT_{\left( \nabla, \upphi \right)}$ has real dimension at least $2 d + 2$. Thus $E^\perp \cap \cT_{\left( \nabla, \upphi \right)}$ cannot be trivial. Let $x \in E^\perp \cap \cT_{\left( \nabla, \upphi \right)}$ have unit norm. Since both $E$ and $\cT_{\left( \nabla, \upphi \right)}$ are invariant under the action of $I$, which is a unitary transformation, we have that $I (x)$ is also in $E^\perp \cap \cT_{\left( \nabla, \upphi \right)}$. Thus, as in the proof of \Cref{Mtheorem:instability}, there is unit length complex number $a + b i$, thus that, if we replace $x$ with $\left( a + b I \right) x$, then $\langle x | \mathbb{H} (x) \rangle_\cH < 0$, and hence $\lambda_{d + 1} < 0$. Thus $\mathbb{H}$ has at least $d + 1$ negative eigenvalues. This concludes the proof.
\end{proof}

\bigskip

\appendix
\section{$\cE_\tau$ is Morse--Bott near $\cV \subset \cB$ when $\tau > \tau_{\mathrm{Bradlow}}$}
\label{app:MB_of_V}

We first show that $\cV$ is a smooth submanifold of $\cB$.

\smallskip

Recall that $\cX = \cC_{\cL, \rd^*} \times \Omega_\cL^0$, $H^1 \left( \Sigma; \Z \right)$ acts on $\cX$ via harmonic gauge transformations that are the identity at a fixed point, and $\cB = \cX / H^1 \left( \Sigma; \Z \right)$. Then we define the space
\begin{equation}
	\widetilde{\cV} \eqdef \{ \ \left( \nabla, \upphi \right) \in \cX \ | \ \left( \nabla, \upphi \right) \mbox{ solves the $\tau$-vortex \cref{eq:vortex_1,eq:vortex_2}} \ \}, \label{eq:V_tilde_def}
\end{equation}
which has the property that $\cV = \widetilde{\cV} / H^1 \left( \Sigma; \Z \right)$. Assuming that $\tau$ is above the Bradlow limit, every element of $\widetilde{\cV}$ is irreducible, hence if $\widetilde{\cV}$ is a smooth manifold, then so is $\cV$. 

\smallskip

First, we prove the following:

\begin{lemma}
	$\widetilde{\cV}$ is a smooth submanifold of $\cX$.
\end{lemma}

\begin{proof}
	We can view $\cX$ as an affine, real Hilbert manifold, and thus it is enough to show that $\widetilde{\cV}$ is a zero locus of a Fredholm map for which zero is a regular value. Let us define a smooth map via
	\begin{equation}
		\nu : \cX \rightarrow L^2 (\Sigma; \rl) \times \Omega_\cL^{0, 1}; \: \left( \nabla, \upphi \right) \mapsto \left( \tfrac{1}{2} \left( \tau - |\upphi|^2 \right) - i \Lambda F_\nabla, \sqrt{2} \: \delbar_\nabla \upphi \right).
	\end{equation}
	Clearly, $\widetilde{\cV}$ is exactly the zero locus of $\nu$. Let us identify $T \widetilde{\cV}$ with $\Omega^{0, 1} \times \Omega_\cL^0$, using the identifications of $i \Omega^1$ with $\Omega^{0, 1}$ via $a \mapsto \alpha \eqdef \sqrt{2} a^{0, 1}$, which is a unitary isomorphism. Then the derivative of $\nu$ has the form
	\begin{equation}
		\left( D \nu \right)_{\left( \nabla, \upphi \right)} \left( \alpha, \uppsi \right) = \left( \Re \left( \sqrt{2} \: \delbar^* \alpha - h \left( \upphi, \uppsi \right) \right), \sqrt{2} \: \delbar_\nabla \uppsi + \alpha \upphi \right). \label{eq:D_nu}
	\end{equation}
	The Reader can find details of the computation of \cref{eq:D_nu} in \cite{N17a}*{Lemma~1.2~and~1.3}. Note that $\left( D \nu \right)_{\left( \nabla, \upphi \right)}$ is Fredholm of index
	\begin{align}
		\index_\rl \left( \left( D \nu \right)_{\left( \nabla, \upphi \right)} \right) = \index_\rl \left( \Re \circ \delbar^* \right) + \index_\rl \left( \delbar_\nabla \right) = \index_\rl \left( \Re \circ \delbar^* \right) + 2 \: \index_\cx \left( \delbar_\nabla \right)
	\end{align}
	For any $\alpha \in \Omega^{0, 1}$, $\Re \left( \delbar^* \alpha \right) = 0$ exactly if $\alpha$ is anti-holomorphic, thus the kernel of $\Re \circ \delbar^*$ has real dimension $2 \: \mathrm{genus} \left( \Sigma \right)$. Its adjoint is $\delbar$ on $L^2 (\Sigma; \rl)$, thus the cokernel of $\Re \circ \delbar^*$ consists of (real) constants only. In the above formula $\delbar_\nabla$ is the Cauchy--Riemann operator from $\Omega_\cL^{0,1}$ to $\Omega_\cL^0$, thus by the Riemann--Roch Theorem, we get
	\begin{equation}
		\index_\cx \left( \delbar_\nabla \right) = d + 1 - \mathrm{genus} \left( \Sigma \right).
	\end{equation}
	Thus
	\begin{align}
		\index_\rl \left( \left( D \nu \right)_{\left( \nabla, \upphi \right)} \right) &= \index_\rl \left( \Re \circ \delbar^* \right) + 2 \: \index_\cx \left( \delbar_\nabla \right) \\
			&= \left( 2 \: \mathrm{genus} \left( \Sigma \right) - 1 \right) + 2 \left( d + 1 - \mathrm{genus} \left( \Sigma \right) \right) \\
			&= 2 d + 1.
	\end{align}
	The adjoint of $\left( D \nu \right)_{\left( \nabla, \upphi \right)}$ is
	\begin{equation}
		\left( D \nu \right)_{\left( \nabla, \upphi \right)}^* \left( f, \upxi \right) = \left( \sqrt{2} \: \delbar f + h \left( \upphi, \upxi \right), \sqrt{2} \: \delbar_\nabla^* \upxi - f \upphi \right). \label{eq:dual_linearization}
	\end{equation}
	The same operator as in \cref{eq:dual_linearization} was studied in \cite{N17a}*{Lemma~1.4~and~Corollary~1.5}, and thus we get that the kernel of $\left( D \nu \right)_{\left( \nabla, \upphi \right)}^*$ is trivial, which concludes the proof that $\widetilde{\cV}$ (and thus $\cV$) is a smooth manifold of dimension $2 d + 1$. The compactness of $\cV$ follows from the gauged Palais--Smale property of the Ginzburg--Landau energy \eqref{eq:gle}; cf. \cite{N17b}*{Lemma~3.1}.

	This proves that $\cV$ is a smooth submanifold of $\cB$.
\end{proof}

\medskip

Since $\cE_\tau$ is gauge invariant, we use the same notation for all of its descendants as well. We prove the following, which implies that $\cE_\tau$ is Morse--Bott near $\cV \subset \cB$ when $\tau > \tau_{\mathrm{Bradlow}}$.

\begin{lemma}
	Let $\left( \nabla, \upphi \right) \in \widetilde{\cV} \subset \cX$ be a solution to the $\tau$-vortex \cref{eq:vortex_1,eq:vortex_2}. Let
	\begin{equation}
		\cT_{\left( \nabla, \upphi \right)} \eqdef \left\{ \ \left( a, \uppsi \right) \in \Omega_{\rd^*}^1 \times \Omega_\cL^0 \ \middle| \ \left( \sqrt{2} a^{0, 1}, \uppsi \right) \in \ker \left( \left( D \nu \right)_{\left( \nabla, \upphi \right)} \right) \ \right\}.
	\end{equation}
	Then has a (real) dimension $2 d + 1$ and for all $\left( a, \uppsi \right) \in \cT_{\left( \nabla, \upphi \right)}$, we have
	\begin{equation}
		\Hess \left( \cE_\tau \right)_{\left( \nabla, \upphi \right)} \left( a, \uppsi \right) = 0. \label{eq:Hess_vanishing}
	\end{equation}
	Furthermore, there is a positive number, $\lambda$, such that, if a pair $\left( a, \uppsi \right) \in \Omega_{\rd^*}^1 \times \Omega_\cL^0$ is $L^2$-orthogonal to $\cT_{\left( \nabla, \upphi \right)}$, then
	\begin{equation}
		\Hess \left( \cE_\tau \right)_{\left( \nabla, \upphi \right)} \left( \left( a, \uppsi \right), \left( a, \uppsi \right) \right) \geqslant \lambda \| \left( a, \uppsi \right) \|_{\Omega_{\rd^*}^1 \times \Omega_\cL^0}^2. \label[ineq]{ineq:Hessian_bound}
	\end{equation}
\end{lemma}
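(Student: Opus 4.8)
The plan is to exploit the Bogomolny identity \eqref{eq:Bogomolny_trick}. Comparing it with the definition of the map $\nu$ from \Cref{app:smoothness_of_V}, one reads off that for \emph{every} $\left( \nabla, \upphi \right) \in \cX$
\begin{equation}
	\cE_\tau \left( \nabla, \upphi \right) = \left\| \nu \left( \nabla, \upphi \right) \right\|_{L^2}^2 + 2 \pi \tau d ,
\end{equation}
so the space $\widetilde{\cV}$ of \eqref{eq:V_tilde_def} is precisely the zero locus $\nu^{- 1} (0)$, which is where $\cE_\tau$ attains its absolute minimum. Since $\nu$ is a smooth (indeed quadratic) map on $\cX$ that vanishes on $\widetilde{\cV}$, expanding $t \mapsto \cE_\tau \left( \nabla + t a, \upphi + t \uppsi \right)$ at a vortex $\left( \nabla, \upphi \right) \in \widetilde{\cV}$ kills the cross-term with the second derivative of $\nu$, and leaves
\begin{equation}
	\Hess \left( \cE_\tau \right)_{\left( \nabla, \upphi \right)} \left( a, \uppsi \right) = 2 \left\| \left( D \nu \right)_{\left( \nabla, \upphi \right)} \left( \sqrt{2} a^{0, 1}, \uppsi \right) \right\|_{L^2}^2
\end{equation}
(the numerical constant depends only on the normalization convention for $\Hess$ and is irrelevant below), where I use the unitary identification $\left( a, \uppsi \right) \leftrightarrow \left( \sqrt{2} a^{0, 1}, \uppsi \right)$ of \eqref{eq:D_nu}. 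In particular the Hessian is manifestly positive semidefinite with null space $\cT_{\left( \nabla, \upphi \right)}$, which is exactly the Morse--Bott structure we want.

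With this formula the first two assertions are immediate. By definition $\left( a, \uppsi \right) \in \cT_{\left( \nabla, \upphi \right)}$ precisely when $\left( \sqrt{2} a^{0, 1}, \uppsi \right) \in \ker \left( D \nu \right)_{\left( \nabla, \upphi \right)}$; and \Cref{app:smoothness_of_V} establishes that $\left( D \nu \right)_{\left( \nabla, \upphi \right)}$ is Fredholm of index $2 d + 1$ with trivial cokernel, so $\dim_\rl \ker \left( D \nu \right)_{\left( \nabla, \upphi \right)} = 2 d + 1$, hence $\dim_\rl \cT_{\left( \nabla, \upphi \right)} = 2 d + 1$. For such $\left( a, \uppsi \right)$ the displayed formula gives $\Hess \left( \cE_\tau \right)_{\left( \nabla, \upphi \right)} \left( a, \uppsi \right) = 0$, which is \eqref{eq:Hess_vanishing}.

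It remains to prove the coercivity \eqref{ineq:Hessian_bound}, and for this I would reduce to the standard fact that a first-order elliptic operator is bounded below on the $L^2$-orthogonal complement of its finite-dimensional kernel: there is $c > 0$ with $\left\| \left( D \nu \right)_{\left( \nabla, \upphi \right)} w \right\|_{L^2} \geqslant c \left\| w \right\|_{L_1^2}$ for every $w$ that is $L^2$-orthogonal to $\ker \left( D \nu \right)_{\left( \nabla, \upphi \right)}$. This follows by the usual compactness argument: if it failed, a sequence $\left( w_n \right)_{n \in \N}$ in the complement with $\left\| w_n \right\|_{L_1^2} = 1$ and $\left\| \left( D \nu \right)_{\left( \nabla, \upphi \right)} w_n \right\|_{L^2} \to 0$ would, via the Rellich lemma and the elliptic estimate $\left\| w \right\|_{L_1^2} \leqslant C \bigl( \left\| \left( D \nu \right)_{\left( \nabla, \upphi \right)} w \right\|_{L^2} + \left\| w \right\|_{L^2} \bigr)$, be Cauchy in $L_1^2$ and converge to a unit-norm element of $\ker \left( D \nu \right)_{\left( \nabla, \upphi \right)}$ lying in its orthogonal complement -- a contradiction. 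Feeding this back into the Hessian formula, and using that $\left( a, \uppsi \right) \leftrightarrow \left( \sqrt{2} a^{0, 1}, \uppsi \right)$ also preserves the $L_1^2$-norm (up to equivalence), yields \eqref{ineq:Hessian_bound} with $\lambda$ a suitable multiple of $c^2$. I do not anticipate a genuine analytic difficulty here; the main thing to be careful about is the bookkeeping around the Coulomb slice -- ensuring that the Hessian of $\cE_\tau$ on the ambient $\cX$ (which is the quadratic form appearing in the statement) and the operator $\left( D \nu \right)_{\left( \nabla, \upphi \right)}$ act on the same space $\Omega_{\rd^*}^1 \times \Omega_\cL^0$, and that the index computation of \Cref{app:smoothness_of_V} is carried out for $\nu$ restricted to that slice. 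Since $\cB$ differs from $\cX$ only by the discrete group $H^1 (\Sigma; \Z)$, passing between the Hessians on $\cB$ and on $\cX$ is harmless.
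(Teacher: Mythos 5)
Your proposal is correct and follows essentially the same route as the paper: rewrite the Hessian at a vortex as $\| (D\nu)_{(\nabla,\upphi)}(\sqrt{2}\,a^{0,1},\uppsi) \|_{L^2}^2$ via the Bogomolny identity, get $\dim_\rl \cT_{(\nabla,\upphi)} = 2d+1$ from the Fredholm index and cokernel triviality established in \Cref{app:smoothness_of_V}, and deduce \cref{ineq:Hessian_bound} from a lower bound for $(D\nu)_{(\nabla,\upphi)}$ on the $L^2$-orthogonal complement of its kernel. The only (inessential) difference is that you justify this last bound by the standard elliptic estimate plus Rellich compactness, whereas the paper invokes the closed image theorem for the Fredholm operator $(D\nu)_{(\nabla,\upphi)}$; both arguments are standard and equivalent here.
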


\begin{proof}
	Recall from \Cref{app:MB_of_V} that the kernel of $\left( D \nu \right)_{\left( \nabla, \upphi \right)}$ has a real dimension $2 d + 1$. Since the map
	\begin{equation}
		\cT_{\left( \nabla, \upphi \right)} \rightarrow \ker \left( \left( D \nu \right)_{\left( \nabla, \upphi \right)} \right) : \: \left( a, \uppsi \right) \mapsto \left( \sqrt{2} a^{0, 1}, \uppsi \right),
	\end{equation}
	is norm-preserving and the target is finite dimensional, we get that $\cT_{\left( \nabla, \upphi \right)} \cong \ker \left( \left( D \nu \right)_{\left( \nabla, \upphi \right)} \right)$, as real vector spaces.

	Using the same computation that gave us \cref{eq:hessian} and combining it with the $\tau$-vortex \cref{eq:vortex_1,eq:vortex_2}, we get for any $\left( a, \uppsi \right) \in \Omega_{\rd^*}^1 \times \Omega_\cL^0$, we have
	\begin{equation}
		\Hess \left( \cE_\tau \right)_{\left( \nabla, \upphi \right)} \left( \left( a, \uppsi \right), \left( a, \uppsi \right) \right) = \int\limits_\Sigma \left( 2 \left| \delbar_\nabla \uppsi + a^{0,1} \upphi \right|^2 + \left( i \Lambda \rd a + \Re \left( h (\uppsi, \upphi) \right) \right)^2 \right) \ \vol_g. \label{eq:Hess_at_vortex}
	\end{equation}
	Using once again $\alpha \eqdef \sqrt{2} a^{0, 1}$, we can rewrite \cref{eq:Hess_at_vortex} as
	\begin{equation}
		\Hess \left( \cE_\tau \right)_{\left( \nabla, \upphi \right)} \left( \left( a, \uppsi \right), \left( a, \uppsi \right) \right) = \| \left( D \nu \right)_{\left( \nabla, \upphi \right)} \left( \alpha, \uppsi \right) \|_{L^2}^2. \label{eq:Hess_at_vortex_rewrite}
	\end{equation}
	Thus if $\left( a, \uppsi \right) \in \cT_{\left( \nabla, \upphi \right)}$, then we get \cref{eq:Hess_vanishing}.

	We prove \cref{ineq:Hessian_bound} by contradiction. Assume that \cref{ineq:Hessian_bound} does not hold and choose a sequence, $\left( a_n, \uppsi_n \right)_{n \in \N}$, such that for all $n \in N$
	\begin{align}
		\left( a_n, \uppsi_n \right) 																&\perp_{L^2} \cT_{\left( \nabla, \upphi \right)}, \\
		\| \left( a_n, \uppsi_n \right) \|_{\Omega_{\rd^*}^1 \times \Omega_\cL^0}					&= 1, \\
		\Hess \left( \cE_\tau \right)_{\left( \nabla, \upphi \right)} \left( a_n, \uppsi_n \right)	&= \frac{1}{n^2}.
	\end{align}
	Let $\alpha_n \eqdef \sqrt{2} a_n^{0, 1}$. Then
	\begin{align}
		\left( \alpha_n, \uppsi_n \right) 																				&\perp_{L^2} \ker \left( \left( D \nu \right)_{\left( \nabla, \upphi \right)} \right), \label[cond]{cond:perp} \\
		\| \left( \alpha_n, \uppsi_n \right) \|_{\Omega^{0, 1} \times \Omega_\cL^0}										&= 1, \label[cond]{cond:norm} \\
		\| \left( D \nu \right)_{\left( \nabla, \upphi \right)} \left( \alpha_n, \uppsi_n \right) \|_{L^2}	&= \frac{1}{n}. \label[cond]{cond:bound}
	\end{align}
	But $\left( D \nu \right)_{\left( \nabla, \upphi \right)}$ considered as an operator from $\Omega^{0, 1} \times \Omega_\cL^0$ onto the $L^2$-completion of the space of pairs of smooth $(0, 1)$-forms and sections of $\cL$, and thus by \cite{BS18}*{Theorem 4.1.16 (Closed Image Theorem), Ineq. (4.1.7)}, we get that there is a positive number $C$, such that, for all $n \in \N$
	\begin{equation}
	\begin{aligned}
		\inf \left( \left\{ \ \| \left( \alpha_n, \uppsi_n \right) - \left( \alpha, \uppsi \right) \|_{\Omega^{0, 1} \times \Omega_\cL^0} \ \middle| \ \left( \alpha, \uppsi \right) \in \ker \left( \left( D \nu \right)_{\left( \nabla, \upphi \right)} \right) \ \right\} \right) \leqslant \\
		C \| \left( D \nu \right)_{\left( \nabla, \upphi \right)} \left( \alpha_n, \uppsi_n \right) \|_{L^2}. \label[ineq]{ineq:closed_image}
	\end{aligned}
	\end{equation}
	Using \cref{cond:perp,cond:norm}, we get
	\begin{equation}
		\| \left( \alpha_n, \uppsi_n \right) - \left( \alpha, \uppsi \right) \|_{\Omega^{0, 1} \times \Omega_\cL^0}^2 = \| \left( \alpha_n, \uppsi_n \right) \|_{\Omega^{0, 1} \times \Omega_\cL^0}^2 + \| \left( \alpha, \uppsi \right) \|_{\Omega^{0, 1} \times \Omega_\cL^0}^2 = 1 + \| \left( \alpha, \uppsi \right) \|_{\Omega^{0, 1} \times \Omega_\cL^0}^2,
	\end{equation}
	and thus
	\begin{equation}
		\forall n \in \N : \quad \inf \left( \left\{ \ \| \left( \alpha_n, \uppsi_n \right) - \left( \alpha, \uppsi \right) \|_{\Omega^{0, 1} \times \Omega_\cL^0} \ \middle| \ \left( \alpha, \uppsi \right) \in \ker \left( \left( D \nu \right)_{\left( \nabla, \upphi \right)} \right) \ \right\} \right) = 1.
	\end{equation}
	Combining this with \cref{cond:bound,ineq:closed_image}, we get that for all $n \in \N$, $1 \leqslant \tfrac{C}{n}$, which is a contradiction.
\end{proof}

\smallskip

\begin{corollary}
\label{corollary:Morse--Bott}
	$\cE_\tau$ is Morse--Bott function, in the sense of \cite{Feehan2020}*{Definition~1.9}, near $\cV \subset \cB$ when $\tau > \tau_{\mathrm{Bradlow}}$.
\end{corollary}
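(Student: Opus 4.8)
The plan is to deduce the corollary directly from the preceding Lemma together with the smoothness statement of \Cref{app:smoothness_of_V}, by matching them against the three requirements of \cite{Feehan2020}*{Definition~1.9}: that the relevant piece of $\mathrm{Crit}(\cE_\tau)$ be a smooth, closed submanifold; that at each of its points the kernel of the Hessian operator coincide with the tangent space of this submanifold; and that the Hessian be nondegenerate transverse to it, which for the self-adjoint $L^2$-Hessian amounts to a uniform spectral gap on the $L^2$-orthogonal complement of the tangent space. I will verify these first in the Coulomb slice model $\cX$ of a neighborhood of $\cV$, and then descend to $\cB$.

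First, $\cV$ is a critical submanifold: by \Cref{app:smoothness_of_V}, under the assumption $\tau > \tau_{\mathrm{Bradlow}}$ (so that every vortex is irreducible), $\widetilde{\cV} = \nu^{-1}(0) \subset \cX$ is a smooth, closed submanifold of dimension $2d+1$, hence $\cV = \widetilde{\cV}/H^1(\Sigma;\Z)$ is one as well; and by the Bogomolny identity \eqref{eq:Bogomolny_trick} every solution of the vortex \cref{eq:vortex_1,eq:vortex_2} minimizes $\cE_\tau$, so $\cV \subset \mathrm{Crit}(\cE_\tau)$. Next, fix $\left( \nabla, \upphi \right) \in \widetilde{\cV}$. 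Because the proof of \Cref{app:smoothness_of_V} shows $(D\nu)_{\left( \nabla, \upphi \right)}$ to be surjective, $T_{\left( \nabla, \upphi \right)}\widetilde{\cV} = \ker\left( (D\nu)_{\left( \nabla, \upphi \right)} \right)$, which under the unitary identification $a \mapsto \sqrt{2}\,a^{0,1}$ is precisely $\cT_{\left( \nabla, \upphi \right)}$. The Lemma above then supplies exactly what is needed: \cref{eq:Hess_vanishing} together with the coercivity estimate \cref{ineq:Hessian_bound} gives $\ker\Hess\left( \cE_\tau \right)_{\left( \nabla, \upphi \right)} = \cT_{\left( \nabla, \upphi \right)} = T_{\left( \nabla, \upphi \right)}\widetilde{\cV}$, and that the Hessian quadratic form is bounded below by $\lambda\,\|\cdot\|^2$ on the $L^2$-orthogonal complement of $\cT_{\left( \nabla, \upphi \right)}$ (uniformly over $\cV$ by its compactness); equivalently, the self-adjoint Hessian operator on the $L^2$-completion has spectrum contained in $\{0\} \cup [\lambda,\infty)$ with nullspace $\cT_{\left( \nabla, \upphi \right)}$. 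In particular, this normal nondegeneracy implies, by a standard argument using the Palais--Smale property, that $\cV$ is isolated in $\mathrm{Crit}(\cE_\tau)$, so after shrinking the tubular neighborhood $U_\cV$ we have $\mathrm{Crit}(\cE_\tau) \cap U_\cV = \cV$.

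Finally, since $H^1(\Sigma;\Z)$ acts freely and properly discontinuously on the irreducible locus of $\cX$, the projection onto $\cB$ restricts to a local diffeomorphism near $\cV$; the gauge-invariant function $\cE_\tau$, its critical set, and the Hessian data above therefore descend verbatim to $\cV \subset \cB$, and the residual constant $\rU(1) \subset \cG$ --- which is \emph{not} quotiented out in the definition \eqref{eq:config_space} of $\cB$ --- contributes only the direction $(0, i\upphi)$, lying simultaneously in $T_{\left( \nabla, \upphi \right)}\cV$ and in $\ker\Hess\left( \cE_\tau \right)_{\left( \nabla, \upphi \right)}$, consistently with $\dim_\rl \cV = 2d+1$. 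This checks all the hypotheses of \cite{Feehan2020}*{Definition~1.9}. The only point requiring genuine care --- and the expected, if modest, obstacle --- is unwinding that definition to confirm that its nondegeneracy requirement is exactly the $L^2$-spectral gap furnished by \cref{ineq:Hessian_bound} (rather than a stronger $L_1^2$-type estimate); this is immediate once one recalls that the Hessian operator here is self-adjoint with discrete spectrum, so that a form-level lower bound on a closed complement of the kernel is the same as invertibility of the operator on its range.
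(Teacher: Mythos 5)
Your proposal is correct and takes essentially the same route as the paper: both verify Feehan's Definition~1.9 by combining the smoothness of $\widetilde{\cV} \subset \cX$ from \Cref{app:smoothness_of_V} with the two conclusions of the preceding lemma (vanishing of the Hessian on $\cT_{\left( \nabla, \upphi \right)}$ and the coercivity estimate on its $L^2$-orthogonal complement), working on the cover $\widetilde{\cV}$ and descending to $\cB$ by gauge invariance. The only difference is cosmetic and sits in the last step: the paper obtains the range condition by noting the Hessian is a symmetric Fredholm operator of index zero on $T_{\left( \nabla, \upphi \right)} \cX$ with kernel $\cT_{\left( \nabla, \upphi \right)}$, whereas you argue via the spectral gap and self-adjointness of the $L^2$-realization --- an equivalent justification, though note the lemma's coercivity is stated with respect to the ambient $\Omega_{\rd^*}^1 \times \Omega_\cL^0$-norm rather than merely the $L^2$-norm, which in fact makes your "point requiring genuine care" easier than you suggest.
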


\begin{proof}
	Recall that $\widetilde{\cV}$ is the $H^1 \left( \Sigma; \Z \right)$-cover of $\cV$ and $\cE_\tau$ is gauge invariant, so it is enough to work on $\widetilde{\cV}$. We have already proved that $\widetilde{\cV} \subset \cX$ is a smooth submanifold. For any $\left( \nabla, \upphi \right) \in \widetilde{\cV}$, the kernel of $D \cE_\tau$ at $\left( \nabla, \upphi \right)$ is $\cT_{\left( \nabla, \upphi \right)}$, which is finite dimensional and thus has a closed (orthogonal) complement. Considered as a map from $T_{\left( \nabla, \upphi \right)} \cX$ to its dual, the Hessian is a Fredholm operator with index zero (as it is the metric dual of a bilinear map). The only thing left to be proven from \cite{Feehan2020}*{Definition~1.9} is that the image of the Hessian is exactly the space of metric duals of vectors orthogonal to $\cT_{\left( \nabla, \upphi \right)}$. Since the kernel of the Hessian is also $\cT_{\left( \nabla, \upphi \right)}$, and the Hessian is Fredholm and symmetric, this is again immediate.
\end{proof}

	\bibliography{references}

\end{document}